\providecommand{\keywords}[1]{\paragraph{Keywords} #1}
\providecommand{\msc}[1]{\paragraph{Mathematics Subject Classification} #1}
\newtheorem{theorem}{Theorem}[section]
\newtheorem{definition}[theorem]{Definition}
\newtheorem{lemma}[theorem]{Lemma}
\newtheorem{proposition}[theorem]{Proposition}
\newtheorem*{remark}{Remark}
\newcommand{\eq}{\begin{equation}}
\newcommand{\en}{\end{equation}}
\newcommand{\nn}{\nonumber}
\newcommand{\prob}{\mathbb P}
\newcommand{\expec}{\mathbb E}
\numberwithin{equation}{section}
\DeclareMathOperator*{\argmin}{arg\,min}
\DeclareMathOperator*{\argmax}{arg\,max}
\title  {Metastability of the Ising model on random regular graphs at zero temperature}
\author
{
 Sander Dommers
 \footnote{
   Universit\`a di Bologna, Dipartimento di Matematica,
   Piazza di Porta San Donato 5, 40126 Bologna, BO, Italy.
   E-mail: {\tt sander.dommers@unibo.it}
 }
}
\date{\today}
\begin{document}

\maketitle

\begin{abstract}
We study the metastability of the ferromagnetic Ising model on a random $r$-regular graph in the zero temperature limit. We prove that in the presence of a small positive external field the time that it takes to go from the all minus state to the all plus state behaves like ${\exp(\beta (r/2+ \mathcal{O}(\sqrt{r}))n)}$ when the inverse temperature $\beta\rightarrow\infty$ and the number of vertices $n$ is large enough but fixed. The proof is based on the so-called pathwise approach and bounds on the isoperimetric number of random regular graphs.
\end{abstract}

\keywords{Metastability, Ising model, random graphs, pathwise approach}

\msc{60K35, 82C20}

\section{Introduction}

One of the most important models in the study of phase transitions is the Ising model. In this model, to every vertex a spin is assigned that can have value $+1$ or $-1$. These spins interact with each other and have the tendency to align: the spins of vertices that are connected by an edge tend to get the same value. This model was originally suggested by Lenz to his student Ising to study ferromagnetism~\cite{Isi24}, but later also became a model to study cooperative behavior. For a historic account of this model see~\cite{Nis05,Nis09,Nis11}.

In this paper, we focus on the dynamical properties of this model, when the system evolves according to Glauber dynamics: at every time step a vertex is selected uniformly at random, if flipping this spin would result in a lower energy this happens with probability one, whereas if the energy would increase this happens only with an exponentially small probability. This is also called the stochastic Ising model, for example in~\cite{Lig85}. In particular, we focus on {\em metastability} in this model. When there is a positive external field present, the stable state when the temperature tends to zero is the one where all vertices have spin $+1$. If, however, we start with a system where all spins are equal to $-1$ and let the system evolve, then it will take a very long time before the system reaches this stable state due to the strong interaction between the spins. Hence, on a short time scale $\boxminus$ seems stable. This is what we call a {\em metastable state} and the time it takes for the system to reach the stable state is called the {\em metastable time}.

The three main problems in the study of metastability are to estimate the metastable time, to determine the shape of the critical droplet that the system visits with probability tending to one as the temperature tends to zero, and to determine the tube of typical trajectories or paths that the system visits with probability tending to one as the temperature tends to zero during the transition from all minuses to all pluses.

Several methods have been developed to study these problems. The first method is called the {\em pathwise approach}, which is also the method used in this paper. This method was first introduced in~\cite{CasGalOliVar84} and further developed in, e.g.,~\cite{OliSco95, OliVar05}. In this method the three main problems are addressed simultaneously: by obtaining large deviation results on the tube of typical trajectories one obtains the shape of the critical droplet and also exponential asymptotics on the metastable time. In~\cite{ManNarOliSco04}, the control of the tube of typical trajectories is decoupled from the control of the metastable time.

For the Ising model, the pathwise approach has been applied to the large enough but finite two-dimensional torus, in the setting where the temperature tends to zero, in~\cite{NevSch91}. There it is shown that, if we call the metastable time $\tau$ and the inverse temperature $\beta$ (more precise definitions will be given in Section~\ref{sec-defs}), that $1/\beta \log \tau \to \Gamma$ in probability as $\beta\to\infty$ for some explicit value $\Gamma$. This $\Gamma$ is called the energy barrier and is the energy difference between the configuration where a critical droplet of $+1$'s has been formed and the configuration where all spins are $-1$. This critical droplet is described exactly, it is shown that such a configuration is visited with probability tending to one as $\beta\to\infty$ and hence also $\Gamma$ is known explicitly. The value of $\Gamma$ depends on the value of the external field, but not on the (finite) volume of the torus. In~\cite{Sch92}, also the tube of typical trajectories or paths that the system visits with probability tending to one as the temperature tends to zero is described. In~\cite{BenCer96}, these three problems where studied and similar results where obtained for the three-dimensional torus.

The pathwise approach has also been applied in~\cite{KotOli93} to a two dimensional anisotropic Ising model where the interaction strengths between spins in the horizontal and vertical direction are different. In~\cite{KotOli94}, a two-dimensional Ising model is studied where also next nearest neighbor interactions are present. In these two papers results analogous to those in~\cite{NevSch91} are given, although the anisotropy causes the formation and shape of the critical droplet to be different.

When the system evolves according to Glauber dynamics the number of sites with spin $+1$ is not conserved. A related model that is conservative is called Kawasaki dynamics. In this model $+$ spins can enter and leave the system from the boundary. Also in this case one can look in the zero temperature limit to the formation of a critical droplet that has to be formed, before the system quickly relaxes to a state with all $+$ spins. The pathwise approach for this model has been used in two and three dimension in \cite{HolOliSco00} and \cite{HolNarOliSco03}, respectively.

A second method is called the {\em potential theoretic approach} which was developed in~\cite{BovEckGayKle01,BovEckGayKle02}. In this method, instead of controlling the tube of typical trajectories, one has to compute capacities between sets of (meta)stable states, which can be done using variational principles. For the Ising model, this method has for example been used in~\cite{BovMan02} to obtain sharper results on the mean metastable time for the two- and three-dimensional torus. There it is shown that $\expec[\tau]=K e^{\beta \Gamma}(1+o(1))$ as $\beta\to\infty$ for some explicit constants $K$ and $\Gamma$ depending on the dimension and the size of the external field.

Recently, also a third method, called the {\em martingale approach}, has been developed in~\cite{BelLan10,BelLan14}. This method allows to show that the process with a properly rescaled time converges to a Markov process on the set of (meta)stable states, where the stable state is absorbing. In~\cite{BelLan11}, this method is illustrated for the two-dimensional Ising model in the zero temperature limit. 

When interpreting the Ising model as a model for cooperative behavior, studying it in a setting where the graph is a lattice makes less sense. Hence, in recent years there has been a large interest in studying the Ising model, and other models, on random graphs, which are itself models for complex networks, see for example~\cite{New03} for an overview. We focus on one of the simplest random graphs, called the {\em random regular graph}, where all vertices have the same degree and the graph is chosen uniformly at random among all graphs with this property.

The Ising model on a random graph was first studied rigorously in~\cite{SanGue08}, where the high temperature and zero temperature solution at equilibrium where obtained for the Erd\H{o}s-R\'enyi random graph using interpolation techniques. Later, in~\cite{DemMon10} the equilibrium solution at all temperatures was obtained for random graphs that locally behave like a branching process. Such graphs include the Erd\H{o}s-R\'enyi random graph and random regular graphs, but also other random graphs with an uncorrelated degree sequence with finite variance degrees. The latter condition was relaxed in~\cite{DomGiaHof10} to random graphs where the degrees have strongly finite mean. The weak limit of the Ising measure in zero field is studied in more detail in~\cite{MonMosSly12} and~\cite{BasDem12}. In~\cite{DomGiaHof14}, the critical exponents for this model have been computed.

About the dynamical properties of the Ising model on random graphs, however, not much is known. In~\cite{MosSly09,MosSly13} the mixing time for the the Erd\H{o}s-R\'enyi random graph and random regular graphs was computed in the high temperature regime. In~\cite{LubSly14}, a cut-off phenomenon was proved for high enough temperature. To the best of our knowledge, on the low temperature behavior only some simulation results have appeared in the literature~\cite{CheEtAl13,CheSheHouXin11, SheCheYeHou13}.

In this paper, we present results on the dynamical properties of the Ising model on random $r$-regular graphs in the the zero temperature limit. We give bounds on the metastable time and show that it is exponential in the inverse temperature $\beta$, the number of vertices in the graph $n$ and the degrees of the vertices $r$, i.e., we show that there exist constants $0<C_1<\infty$ and $C_2<\infty$ such that $e^{\beta(r/2-C_1\sqrt{r})n}<\tau<e^{\beta(r/2+C_2\sqrt{r})n}$ with probability tending to one as $\beta\to\infty$. For the exact result, see Theorem~\ref{thm-metastabletime} below. The exponential growth of the metastable time with $n$ is an important difference from the Ising model on finite dimensional lattices, where the metastable time is uniformly bounded in the size of the graph, see, e.g.,~\cite{NevSch91} for $d=2$ and \cite{BenCer96} for $d=3$.

Whereas the results on the equilibrium solution mentioned above are mainly based on the locally tree-like structure of the random graphs, our results on the metastable time are based on the structure of subsets consisting of a positive fraction of the vertices. We use results by Bollob\'as~\cite{Bol88} and Alon~\cite{Alo97} concerning the {\em isoperimetric number} of random regular graphs, which is a measure of how connected to the rest of the graph a subset consisting of half of the vertices has to be.

We combine these results with the pathwise approach. We use in particular the results in~\cite{ManNarOliSco04}, where, as mentioned, the control of the tube of typical trajectories is decoupled from the control of the metastable time. This is very useful in our setting, since the random structure of the graphs make it very hard to deduce what these tubes look like precisely. Also for the other approaches to metastability much more detailed model dependent information would be necessary. We combine this with the approach used in~\cite{CirNar13}, which simplifies the necessary calculations.

%---------------------------------------------------------------------------------------------------
%---------------------------------------------------------------------------------------------------

\section{Model definitions and preliminary results}\label{sec-defs}
In this section we give the definitions of random regular graphs and of the Ising model, its dynamics and metastability. We also present several preliminary results on these topics that are used in the paper. 
\subsection{Random regular graphs}
We denote a graph $G_n=(V_n=[n],E_n)$ where $[n]=\{1,\ldots,n\}$ by $G_n$. For given $r\in \mathbb{N}$ and $n > r$ with $n r$ even, we say that $G_n$ is a {\em random $r$-regular graph} if we select $G_n$ uniformly at random from the set of all simple $r$-regular graphs with $n$ vertices.

An event $A_n$ is said to hold {\em with high probability (whp)}, if 
\eq
\lim_{n\rightarrow\infty}\prob[A_n]=1,
\en
where $\prob$ denotes the measure of selecting a random $r$-regular graph.

For a graph $G_n$, the (edge) boundary of a set $A \subseteq [n]$ equals
\eq
\partial_e A =\{(i,j)\in E_n \,|\, i \in A,\, j \notin A \}.
\en

An important quantity of the graphs of interest is the so-called {\em isoperimetric number}, which is defined as follows:
\begin{definition}[Isoperimetric number] For a graph $G_n$, the (edge) {\em isoperimetric number} of $G_n$ equals
\eq
i_e(G_n) = \min_{\stackrel{A \subset [n]}{|A|\leq n/2}} \frac{|\partial_e A|}{|A|},
\en 
where $|A|$ denotes the cardinality of the set $A$.

Furthermore, define
\eq
i_e'(G_n) = \min_{\stackrel{A \subset [n]}{|A| = \lfloor n/2 \rfloor}} \frac{|\partial_e A|}{|A|}.
\en
\end{definition}
Note that it always holds that $i_e(G_n) \leq i_e'(G_n)$. The following lower bounds on the isoperimetric number were proved in~\cite{Bol88} by Bollob\'as:
\begin{proposition}[Lower bounds on isoperimetric number]\label{prop-lb-isoperimetric}
Let $G_n$ be a random $r$-regular graph with $r\geq3$ and let $\zeta\in(0,1)$ be such that
\eq
2^{4/r}<(1-\zeta)^{1-\zeta}(1+\zeta)^{1+\zeta}.
\en
Then, whp,
\eq
i_e(G_n) \geq (1-\zeta)r/2.
\en
In particular, whp,
\eq\label{eq-lb-isoperimetric}
i_e(G_n) \geq \frac{r}{2} - \sqrt{\log 2}\sqrt{r},
\en
and, for $r\geq6$, whp,
\eq
\label{eq-ieg1}
i_e(G_n) >1.
\en
\end{proposition}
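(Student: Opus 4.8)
\section*{Proof proposal}

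The plan is to prove the bound by a first-moment (union bound) computation in the configuration model and then transfer the conclusion to the uniform random simple graph. I would generate $G_n$ by the pairing model: assign $r$ half-edges to each of the $n$ vertices and pick a uniformly random perfect matching of the $rn$ half-edges. Since the probability that this matching produces a simple graph converges to $e^{-(r^2-1)/4}>0$, and conditioning on simplicity yields the uniform distribution over simple $r$-regular graphs, any event of vanishing probability in the pairing model also has vanishing probability under $\prob$. It therefore suffices to show that the expected number of ``bad'' sets --- sets $A$ with $|A|\le n/2$ and $|\partial_e A|<(1-\zeta)r|A|/2$ --- tends to $0$ in the pairing model, whence by Markov's inequality no such set exists whp and $i_e(G_n)\ge(1-\zeta)r/2$.

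For a fixed $A$ with $|A|=a$ there are $ra$ half-edges in $A$ and $r(n-a)$ in its complement; conditioning on exactly $b$ crossing edges gives
\eq
\prob[\,|\partial_e A|=b\,]=\frac{\binom{ra}{b}\binom{r(n-a)}{b}\,b!\,(ra-b-1)!!\,(r(n-a)-b-1)!!}{(rn-1)!!},
\en
the parity constraints only affecting subexponential factors. Summing $\binom{n}{a}$ times this over $b<(1-\zeta)ra/2$ and estimating factorials by Stirling, the logarithm of the first moment per vertex becomes a rate function in the two parameters $\alpha=a/n$ and $x=b/(ra)$. Specializing to balanced cuts $\alpha=1/2$, the crossing count concentrates at $x=1/2$ (boundary per vertex $\approx r/2$), and a short computation reduces the per-vertex rate to $\log 2+\tfrac r2\big(H(x)-\log 2\big)$ with $H$ the binary entropy; evaluating at the extremal admissible value $x=(1-\zeta)/2$ and requiring negativity yields exactly $(1-\zeta)^{1-\zeta}(1+\zeta)^{1+\zeta}>2^{4/r}$. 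This is how the stated condition on $\zeta$ arises.

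The main obstacle is to verify that balanced cuts dominate, i.e.\ that the full first moment, summed over all $\alpha\in(0,1/2]$ and all admissible $b$, is governed by the $\alpha=1/2$ contribution. Here two effects compete: smaller sets are fewer (the factor $\binom{n}{\alpha n}=e^{nH(\alpha)+o(n)}$ shrinks) but require a larger relative deviation of the boundary below its typical value $r(1-\alpha)a$. I would show that the two-variable rate function $\Phi(\alpha,x)$ obtained from Stirling is, over the region $x\le(1-\zeta)/2$, maximized at $\alpha=1/2$, so that the constraint derived from balanced cuts controls the whole sum; this monotonicity/concavity analysis (together with handling a few boundary and parity cases) is the technical heart of the argument and follows Bollob\'as~\cite{Bol88}.

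Finally, the two explicit bounds follow by choosing $\zeta$ well. Writing $g(\zeta)=(1-\zeta)\log(1-\zeta)+(1+\zeta)\log(1+\zeta)$, one has $g'(\zeta)=2\,\atanh(\zeta)$ and hence $g(\zeta)=\zeta^2+\zeta^4/6+\cdots>\zeta^2$ for $\zeta\in(0,1)$. Thus $\zeta=2\sqrt{\log 2}/\sqrt r$ (which lies in $(0,1)$ for $r\ge3$) satisfies $g(\zeta)>\zeta^2=4\log 2/r$, so the hypothesis holds and $i_e(G_n)\ge(1-\zeta)r/2=r/2-\sqrt{\log 2}\sqrt r$, giving \eqref{eq-lb-isoperimetric}. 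For \eqref{eq-ieg1} I would instead pick $\zeta$ just below $1-2/r$ so that $(1-\zeta)r/2>1$; for $r\ge7$ this already follows from \eqref{eq-lb-isoperimetric}, while for $r=6$ one checks directly that $g(\zeta)>(4\log 2)/6$ still holds for such $\zeta$ (since $g(2/3)\approx0.485>0.462$), yielding $i_e(G_n)>1$.
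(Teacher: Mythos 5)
Your proposal is correct and takes essentially the same route as the paper: the paper obtains the main bound $i_e(G_n)\geq(1-\zeta)r/2$ by citing Bollob\'as~\cite{Bol88} (your configuration-model first-moment sketch is precisely his argument, and you defer its technical core---the dominance of balanced cuts---to that same reference), so the two are on equal footing there. Your derivations of the explicit consequences also match the paper's: the choice $\zeta=2\sqrt{\log 2}/\sqrt{r}$ with $g(\zeta)>\zeta^2$ gives \eqref{eq-lb-isoperimetric}, and taking $\zeta$ just below $2/3$ for $r=6$ (with $g(2/3)\approx0.485>(2/3)\log 2\approx0.462$) is exactly the ``better bound for small $r$'' the paper alludes to for \eqref{eq-ieg1}, with \eqref{eq-lb-isoperimetric} sufficing for $r\geq7$.
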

For small $r$ better bounds on $i_e(G_n)$ than~\eqref{eq-lb-isoperimetric} can be obtained as observed in~\cite{Bol88}. This can be used to show~\eqref{eq-ieg1} for $r=6$, whereas~\eqref{eq-lb-isoperimetric} is sufficient for $r\geq7$. When~\eqref{eq-ieg1} holds, it should be noted that for all subsets $A \subset [n]$ with $|A|\leq n/2$, there always exists a vertex in $A$ that is connected to at least two vertices in $A^c$ since $|\partial_e A|>|A|$. This property turns out to be useful later.

For $r$ tending to infinity $r/2-\Theta(\sqrt{r})$ turns out to be the correct scaling of the isoperimetric number, since in~\cite{Alo97} Alon derived the following upper bound:
\begin{proposition}[Upper bound on isoperimetric number]\label{prop-ub-isoperimetric}
There exists an absolute constant $C>0$ such that for any $r$-regular graph $G_n$ with $n\geq 40 r^9$ there exists a set $A \subset [n]$ with $|A|=\lfloor n/2\rfloor$, such that
\eq
\frac{|\partial_e A|}{|A|} \leq \frac{r}{2}-C \sqrt{r}.
\en
Hence,
\eq \label{eq-ub-ieprime}
i_e(G_n) \leq i_e'(G_n) \leq \frac{r}{2}-C \sqrt{r}.
\en
\end{proposition}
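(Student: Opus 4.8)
The plan is to reformulate the existence of a balanced set with small boundary as a spectral statement and then to round an eigenvector to an honest bisection. Write $A$ for the adjacency matrix of $G_n$, and encode a balanced set $S$ with $|S|=\lfloor n/2\rfloor$ by $x\in\{-1,+1\}^n$, with $x_i=+1$ iff $i\in S$. Counting same-side versus crossing edges gives $|\partial_e S|=\frac14\bigl(rn-x^{\top}Ax\bigr)$ up to an additive $O(r)$ coming from the parity of $n$. Hence $|\partial_e S|/|S|\le \tfrac r2-C\sqrt r$ is \emph{equivalent} to producing a (nearly) balanced vector $x\perp\mathbf 1$ with $x^{\top}Ax\ge c\,\sqrt r\,n$ for some absolute $c>0$. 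Since a \emph{uniformly random} balanced bipartition cuts each edge with probability $\tfrac12+O(1/n)$ and so has boundary $\tfrac r2|S|$ in expectation, the entire content of the proposition is to beat this average by order $\sqrt r$ per vertex; no pure averaging argument achieves this, so genuine structure of $G_n$ must be used.

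The structure I would exploit is the Alon--Boppana lower bound on the second adjacency eigenvalue: every $r$-regular graph on enough vertices satisfies $\lambda_2(G_n)\ge 2\sqrt{r-1}-\varepsilon$. I would prove this by the trace (closed-walk) method: bounding the number of closed walks of length $2k$ based at a fixed vertex from below by the corresponding count in the infinite $r$-regular tree (which grows like a Catalan factor times $(r-1)^k$) forces $\operatorname{tr}(A^{2k})=\sum_i\lambda_i^{2k}$ to be large, and peeling off the top eigenvalue $r$ then yields a large $\lambda_2$. This is exactly where the hypothesis $n\ge 40r^9$ enters: it guarantees that a positive fraction of vertices have a genuinely tree-like neighbourhood out to radius of order $\log_r n$, which is what makes the error $\varepsilon$ small enough to be negligible next to $\sqrt r$ and absorbed into the constant $C$. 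The output is an eigenvector $f\perp\mathbf 1$ with $f^{\top}Af=\lambda_2\|f\|^2\ge (2\sqrt{r-1}-\varepsilon)\|f\|^2$.

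The final and hardest step is to convert $f$ into an honest balanced cut without losing more than a constant factor. The naive route, thresholding $f$ at its median and invoking the discrete Cheeger inequality, is fatally lossy here: Cheeger only controls the boundary by $O(\sqrt{r(r-\lambda_2)})$ per vertex, and since $r-\lambda_2$ is of order $r$ this does not even break the trivial bound $r/2$. \textbf{The main obstacle is therefore the rounding}: one must produce $x\in\{-1,+1\}^n$ with $x^{\top}Ax=\Omega(\lambda_2 n)=\Omega(\sqrt r\,n)$, whereas a Cauchy--Schwarz sweep only controls $\sum_{(i,j)\in E_n}|f_i-f_j|$ and loses a further square-root factor that is fatal for the constant. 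I would follow Alon's argument, which exploits the quantitative \emph{delocalisation} of $f$ implied by the tree-like structure: because neighbourhoods look like the $r$-regular tree, the eigenvector cannot concentrate on few coordinates, and one can choose a (random) threshold whose expected quadratic form retains a constant fraction of $\lambda_2\|f\|^2$. Having obtained a $\pm1$ vector with $\sum_i x_i$ small and $x^{\top}Ax\ge c\sqrt r\,n$, I would finish by flipping $O(1)$ coordinates so that the set has size \emph{exactly} $\lfloor n/2\rfloor$; each flip changes $|\partial_e S|$ by at most $r$ and $|S|$ by $1$, so this correction is absorbed into the constant and yields $i_e(G_n)\le i_e'(G_n)\le r/2-C\sqrt r$.
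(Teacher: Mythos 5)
First, a point of comparison: the paper does not prove Proposition~\ref{prop-ub-isoperimetric} at all --- it is imported verbatim from Alon~\cite{Alo97}, and Alon's argument there is not spectral: it constructs a balanced set directly by a combinatorial/probabilistic argument, with the hypothesis $n\geq 40r^9$ providing the room (many vertices with pairwise disjoint, non-interacting neighbourhoods) needed to make local gains of order $\sqrt r$ per vertex add up coherently. Your sketch must therefore stand on its own, and it has a genuine gap exactly at the step you yourself flag as the hardest: the rounding. You reduce the proposition to the claim that the second eigenvector $f$ can be thresholded into a \emph{balanced} $\pm1$ vector retaining a constant fraction of $\lambda_2\|f\|^2$, justified by ``delocalisation of $f$ implied by the tree-like structure.'' For the class of graphs in the proposition --- \emph{arbitrary} $r$-regular graphs, with no girth or randomness assumption --- both halves of that justification fail. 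The hypothesis $n\geq 40r^9$ does not force any vertex to have a tree-like neighbourhood (take an $r$-regular graph in which every vertex lies in many triangles, e.g.\ dense blobs glued sparsely), and the second eigenvector can be essentially localized: attach a small, nearly disconnected $r$-regular gadget (size $s=o(\sqrt r\, n/r)$, joined by $O(1)$ edges) to a Ramanujan graph. Then $\lambda_2$ is close to $r$, its eigenvector is supported on $o(n)$ vertices, and \emph{no} thresholding of it yields a balanced cut below $rn/4-o(\sqrt r\,n)$; the good bisection promised by the proposition lives in the Ramanujan bulk and is invisible to $f$. This shows the reduction ``Alon--Boppana for $\lambda_2$ plus rounding of \emph{its} eigenvector'' is unsound as a proof scheme: the Alon--Boppana lower bound is witnessed by local test vectors on balls around far-apart vertices, and that information is simply not carried by the global second eigenvector. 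Nor can a generic tool rescue the step: rounding quadratic forms over balanced $\pm1$ vectors (Grothendieck-type inequalities for graphs) loses logarithmic, not constant, factors. In effect you have assumed a statement at least as strong as the theorem being proved.

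Two secondary points. The trace-method part of your sketch is salvageable, but for a different reason than you give: the inequality ``closed walks in $G_n$ at a vertex are at least as numerous as in the $r$-regular tree'' holds for \emph{every} $r$-regular graph via the universal cover, with no tree-like-neighbourhood hypothesis; however, with $n\geq 40r^9$ the admissible walk length is bounded ($2k\approx 9$), so you obtain $\lambda_2\geq c\sqrt r$ for an absolute $c<2$, not $2\sqrt{r-1}-\varepsilon$ with $\varepsilon$ negligible --- harmless for the statement, but the claim as written is off. Finally, even granting a rounded vector, ``flipping $O(1)$ coordinates'' to reach size exactly $\lfloor n/2\rfloor$ presumes the rounding produces imbalance $O(1)$; each flip costs up to $2r$ in $x^{\top}Ax$, so any imbalance larger than $O(n/\sqrt r)$ would already destroy the $\Omega(\sqrt r\,n)$ gain, and your sketch secures no such bound. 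To repair the proof along honest lines you would have to abandon the global eigenvector and argue as Alon does, directly constructing the bisection.
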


\subsection{Ising model, dynamics and metastability}
For fixed $n\in\mathbb{N}$, the Ising model on a graph $G_n$ is defined as follows. To each vertex $i\in[n]$ we assign a spin $\sigma_i\in\{-1,+1\}$ and we denote $\sigma=(\sigma_i)_{i\in[n]}$. The Hamiltonian $H(\sigma)$ is then given by 
\eq\label{eq-hamiltonian}
H(\sigma) = -J \sum_{(i,j)\in E_n} \sigma_i \sigma_j - h \sum_{i\in[n]}\sigma_i,
\en
where $J>0$ is the interaction constant and $h\in \mathbb{R}$ is the external magnetic field.
The Boltzmann-Gibbs measure for the Ising model on $G_n$ is then defined as
\eq
\mu_n(\sigma) = \frac{1}{Z_n} e^{-\beta H(\sigma)},
\en
where $\beta \geq 0$ is the inverse temperature and $Z_n$ is the normalization factor, called the partition function, i.e.,
\eq
Z_n = \sum_{\sigma\in\{-1,+1\}^n} e^{-\beta H(\sigma)}.
\en
Without loss of generality, we assume that $J=1$, since this is just a rescaling of $\beta$ and $h$. 

For a set $A\subseteq [n]$, denote by $\sigma^A$ the configuration where
\eq
\sigma^A_i = \left\{\begin{array}{ll} +1, & {\rm if\ }i \in A, \\ -1, & {\rm if\ } i \notin A. \end{array} \right.
\en
We also denote $\boxminus=\sigma^\varnothing$ and $\boxplus=\sigma^{[n]}$, the all minus and all plus configurations, respectively. We often identify the vertex and its spin, e.g., we say that vertex $i$ has a $+$ neighbor if there is a vertex $j$ such that $(i,j)\in E_n$ with $\sigma_j = +1$.

We let the system evolve according to {\em Glauber dynamics} with Metropolis rates. That is, we consider a discrete time Markov chain where the transition probability $c(\sigma^A,\sigma^B)$ from configuration $\sigma^A$ to $\sigma^B$ equals
\eq
c(\sigma^A,\sigma^B) = \left\{ \begin{array}{ll}  \frac1n e^{-\beta[H(\sigma^B)-H(\sigma^A)]^+}, &{\rm if\ } |A \triangle B|=1; \\ 1- \sum_{B: |A \triangle B|=1} \frac1n e^{-\beta[H(\sigma^B)-H(\sigma^A)]^+}, & {\rm if\ } A=B, \\
0,& {\rm otherwise,} \end{array}\right.
\en
where  $A \triangle B$ is the symmetric difference between sets $A$ and $B$, and $[a]^+=\max\{a,0\}$. We denote by $\prob_\eta$ the law of the process starting from configuration $\eta$.

The time at which the process visits the configuration $\sigma$ for the first time if the process starts from $\eta$ is called the {\em hitting time} of $\sigma$ and is denoted by $\tau_\sigma$. When studying metastability, the problem is to find the hitting time of the stable configuration if the system starts in a metastable configuration. We now define what it means for a configuration to be (meta)stable.

The {\em stable state} is the state for which the Hamiltonian is minimal. Throughout the rest of the paper we assume that $h>0$ and fixed, so that it is obvious from~\eqref{eq-hamiltonian} that the stable state is $\boxplus$.

To define metastable states, we need to define the {\em communication height} between two configurations $\sigma$ and $\sigma'$ which is given by
\eq
\Phi(\sigma,\sigma') = \min_{{\rm \omega\ path\ from\ }\sigma{\rm\ to\ }\sigma'} \max_{\sigma''\in \omega} H(\sigma''),
\en
where we say that a sequence of configurations $\omega$ is a path from $\sigma$ to $\sigma'$ if $\omega=(\sigma=\sigma^{A_0}, \sigma^{A_1}, \ldots,\sigma^{A_\ell}=\sigma')$ for some $\ell\geq1$ and $|A_k\triangle A_{k+1}|=1$ for all $0\leq k<\ell$. We then define the {\em stability level} of a configuration $\sigma$ as
\eq
V_\sigma = \min_{\sigma' : H(\sigma')<H(\sigma)} \Phi(\sigma, \sigma') - H(\sigma).
\en
Note that $V_\boxplus=\infty$ since there are no configurations with smaller energy.
The {\em maximal stability level} is defined as
\eq
\Gamma = \max_{\sigma \neq \boxplus} V_\sigma,
\en
and the {\em metastable states} are those configurations $\eta$ such that $V_\eta=\Gamma$.

In~\cite{CirNar13}, Cirillo and Nardi give an easy characterization of the maximal stability level $\Gamma$ by looking at the communication height between the metastable and stable state, and upper bounds on the stability levels of all other states. This gives an easier way to compute $\Gamma$, since it is no longer necessary to compute the stability level of all states exactly. We use a similar approach to give bounds on $\Gamma$ by verifying the following conditions for some $\Gamma_\ell\leq\Gamma_u$:
\begin{description}
\item[Condition (1)]
$\Phi(\boxminus,\boxplus)- H(\boxminus) \geq \Gamma_\ell$;
\item[Condition (2)]
$\Phi(\boxminus,\boxplus)- H(\boxminus) \leq \Gamma_u$;
\item[Condition (3a)]
for all $\sigma \notin \{\boxminus,\boxplus\}$ it holds that $V_\sigma \leq \Gamma_u$;
\item[Condition (3b)]
for all $\sigma \notin \{\boxminus,\boxplus\}$ it holds that $V_\sigma < \Gamma_\ell$.
\end{description}
Under these conditions we can get the following results on the metastable time:
\begin{proposition}[Metastable time]\label{prop-metastable-time}
If Conditions {\bf (1)}, {\bf (2)} and {\bf (3a)} hold then, for all $\varepsilon>0$,
\eq
\lim_{\beta\rightarrow\infty} \prob_{\boxminus}[e^{\beta(\Gamma_\ell-\varepsilon)}< \tau_{\boxplus} < e^{\beta(\Gamma_u+\varepsilon)}]=1.
\en
If $\eta$ is a metastable state, then the same holds with $\boxminus$ replaced by $\eta$.
If also Condition {\bf (3b)} holds, then $\boxminus$ is the unique metastable state.
\end{proposition}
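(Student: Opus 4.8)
The plan is to derive this statement from the general results of the pathwise approach in~\cite{ManNarOliSco04}, combined with the reasoning of~\cite{CirNar13}; the four Conditions are designed precisely so that they feed the hypotheses of those results. First I would record the trivial but essential observation that, since $h>0$, the configuration $\boxplus$ is the unique global minimizer of $H$, hence the unique stable state, so that the hitting time of the set of stable states coincides with $\tau_\boxplus$. The whole argument then rests on two general estimates for Metropolis dynamics: an upper (recurrence) bound, saying that from any starting configuration the chain reaches $\boxplus$ within time $e^{\beta(\Gamma+\varepsilon)}$ with probability tending to $1$, where $\Gamma=\max_{\sigma\neq\boxplus}V_\sigma$; and a lower bound, saying that from a configuration $\eta$ the time to reach $\boxplus$ is at least $e^{\beta(\Phi(\eta,\boxplus)-H(\eta)-\varepsilon)}$ with probability tending to $1$, because every path from $\eta$ to $\boxplus$ must climb to the communication height $\Phi(\eta,\boxplus)$. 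The task is therefore to convert Conditions~\textbf{(1)}--\textbf{(3b)} into the quantitative inputs $\Gamma\le\Gamma_u$, $\Phi(\boxminus,\boxplus)-H(\boxminus)\ge\Gamma_\ell$ and $\Gamma\ge\Gamma_\ell$ that these estimates require.

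For the upper bound I would show $\Gamma\le\Gamma_u$. For $\sigma\notin\{\boxminus,\boxplus\}$ this is exactly Condition~\textbf{(3a)}, while for $\sigma=\boxminus$, since $\boxplus$ has strictly smaller energy, the definition of the stability level gives $V_\boxminus\le\Phi(\boxminus,\boxplus)-H(\boxminus)\le\Gamma_u$ by Condition~\textbf{(2)}. Hence $\Gamma\le\Gamma_u$, and the recurrence estimate yields $\prob_\eta[\tau_\boxplus>e^{\beta(\Gamma_u+\varepsilon)}]\to0$ for every $\eta$. For the lower bound, started from $\boxminus$ Condition~\textbf{(1)} gives $\Phi(\boxminus,\boxplus)-H(\boxminus)\ge\Gamma_\ell$, so the barrier-crossing estimate directly yields $\prob_\boxminus[\tau_\boxplus<e^{\beta(\Gamma_\ell-\varepsilon)}]\to0$; I emphasize that this uses the communication height rather than $V_\boxminus$, which matters because $\boxminus$ need not be metastable under Conditions~\textbf{(1)}--\textbf{(3a)} alone. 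For a metastable state $\eta$ I would first establish $\Gamma\ge\Gamma_\ell$: a standard descent from $\boxminus$ to $\boxplus$ along configurations of decreasing energy, using the definition of $V_\sigma$ at each step, gives $\Phi(\boxminus,\boxplus)-H(\boxminus)\le\Gamma$, and together with Condition~\textbf{(1)} this gives $\Gamma\ge\Gamma_\ell$. Then, since $\boxplus$ lies below $\eta$, one has $\Phi(\eta,\boxplus)-H(\eta)\ge V_\eta=\Gamma\ge\Gamma_\ell$, and the same estimate gives the lower bound from $\eta$.

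For the uniqueness statement under Condition~\textbf{(3b)}, I would show $V_\boxminus\ge\Gamma_\ell$, so that $\boxminus$ is the unique maximizer of the stability level. Let $\zeta$ with $H(\zeta)<H(\boxminus)$ realize $V_\boxminus=\Phi(\boxminus,\zeta)-H(\boxminus)$. If $\zeta=\boxplus$ this is Condition~\textbf{(1)}. Otherwise I would descend from $\zeta$ to $\boxplus$ through configurations $\sigma\notin\{\boxminus,\boxplus\}$, each of stability level $<\Gamma_\ell$ by Condition~\textbf{(3b)}, to obtain $\Phi(\zeta,\boxplus)<H(\boxminus)+\Gamma_\ell\le\Phi(\boxminus,\boxplus)$; since $\Phi(\boxminus,\boxplus)\le\max\{\Phi(\boxminus,\zeta),\Phi(\zeta,\boxplus)\}$, the maximum is attained by $\Phi(\boxminus,\zeta)$, forcing $\Phi(\boxminus,\zeta)\ge\Phi(\boxminus,\boxplus)\ge H(\boxminus)+\Gamma_\ell$ and hence $V_\boxminus\ge\Gamma_\ell$. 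Combined with $V_\sigma<\Gamma_\ell$ for all other $\sigma\neq\boxplus$, this shows $\Gamma=V_\boxminus$ is attained only at $\boxminus$, so $\boxminus$ is the unique metastable state.

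I anticipate two main obstacles. The first is the lower bound started from $\boxminus$ rather than from a metastable state: one cannot simply quote the standard ``$V_\eta$ lower bound'', because $V_\boxminus$ may be strictly smaller than $\Gamma_\ell$, so one must instead use the communication-height version of the estimate and check that descents to lower energy never reduce the cost of the required climb to $\Phi(\boxminus,\boxplus)$. The second is the descent argument for uniqueness, which hinges on the comparison $\Phi(\boxminus,\boxplus)\le\max\{\Phi(\boxminus,\zeta),\Phi(\zeta,\boxplus)\}$ together with Condition~\textbf{(3b)} to rule out a cheaper route from $\boxminus$ to some low-energy configuration; getting the strict versus non-strict inequalities consistent with the roles of $\Gamma_\ell$ and $\Gamma_u$ is the delicate bookkeeping here.
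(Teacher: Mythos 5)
Your proposal is correct and takes essentially the same route as the paper's Appendix~\ref{apx-metastabletime}: both arguments reduce the two-sided estimate to the general pathwise-approach theorems of~\cite{ManNarOliSco04} (recurrence to the set $K_{\Gamma_u}=\{\boxplus\}$ after checking $V_\sigma\le\Gamma_u$ for all $\sigma\neq\boxplus$ via Conditions \textbf{(2)} and \textbf{(3a)}, and a barrier-crossing estimate for the lower bound), and both use the descent-and-concatenation argument of~\cite{CirNar13} to turn Conditions \textbf{(1)} and \textbf{(3b)} into $\Gamma\ge\Gamma_\ell$ and uniqueness of $\boxminus$. The only cosmetic differences are that the paper implements your communication-height lower bound from $\boxminus$ as an exit problem from the non-trivial cycle $\mathcal{C}=\{\sigma:\Phi(\boxminus,\sigma)-H(\boxminus)<\Gamma_\ell\}$ via \cite[Theorem~2.17]{ManNarOliSco04}, invokes \cite[Theorem~4.1]{ManNarOliSco04} directly for the lower bound from a metastable $\eta$, and phrases the uniqueness descent as a proof by contradiction rather than your direct argument through $\Phi(\boxminus,\boxplus)\le\max\{\Phi(\boxminus,\zeta),\Phi(\zeta,\boxplus)\}$.
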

This proposition can be proved by showing that $\Gamma_\ell\leq\Gamma\leq\Gamma_u$ using the methodology of~\cite{CirNar13}, where the same conditions are used with $\Gamma_\ell=\Gamma_u$, and combining this with  results on the metastable time in~\cite{ManNarOliSco04}. We give this prove in Appendix~\ref{apx-metastabletime}.

%---------------------------------------------------------------------------------------------------
%---------------------------------------------------------------------------------------------------

\section{Main result and discussion}
We can now present our main result, which gives bounds on the metastable time:
\begin{theorem}[Metastable time for random $r$-regular graphs]\label{thm-metastabletime}
Let $G_n$ be a random $r$-regular graph with $r\geq3$ and suppose that $0<h<C_0\sqrt{r}$ for some uniform constant $C_0>0$ small enough. Then, there exist uniform constants $0<C_1<\sqrt3/2$ and $C_2<\infty$ so that, whp,
\eq
\lim_{\beta\rightarrow\infty} \prob_{\boxminus}[e^{\beta(r/2-C_1\sqrt{r})n}< \tau_{\boxplus} < e^{\beta(r/2+C_2\sqrt{r})n}]=1.
\en
If $\eta$ is a metastable state, then the same holds with $\boxminus$ replaced by $\eta$.
If $r\geq6$, then $\boxminus$ is the unique metastable state.
\end{theorem}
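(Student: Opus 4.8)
The plan is to reduce everything to Proposition~\ref{prop-metastable-time} by verifying Conditions {\bf (1)}, {\bf (2)}, {\bf (3a)} and {\bf (3b)} for $\Gamma_\ell=(r/2-C_1\sqrt r)n$ and $\Gamma_u=(r/2+C_2\sqrt r)n$. The one computation driving all four is the energy of a configuration $\sigma^A$ relative to $\boxminus$: counting edges inside $A$, inside $A^c$ and across the cut, together with the field term, gives $H(\sigma^A)-H(\boxminus)=2(|\partial_e A|-h|A|)$. Thus along any monotone path $\varnothing=A_0\subset A_1\subset\cdots\subset A_n=[n]$ the controlling quantity is the edge boundary of the growing $+$ region, discounted by the field.

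For Condition {\bf (1)} I would use that any path from $\boxminus$ to $\boxplus$ flips one spin at a time and hence passes through some $\sigma^A$ with $|A|=\lfloor n/2\rfloor$. There $|\partial_e A|\ge i_e(G_n)|A|$, and \eqref{eq-lb-isoperimetric} gives $i_e(G_n)\ge r/2-\sqrt{\log 2}\,\sqrt r$ whp. Combined with $h<C_0\sqrt r$ this forces the maximal energy along the path to be at least $2\lfloor n/2\rfloor(i_e(G_n)-h)\ge(r/2-C_1\sqrt r)n$ for any $C_1>\sqrt{\log 2}+C_0$ once $n$ is large, which is exactly Condition {\bf (1)}. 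Since $\sqrt{\log 2}<\sqrt3/2$, there is room to keep $C_1<\sqrt3/2$ provided $C_0$ is small. For Condition {\bf (2)} I only need one good path: I would take a monotone path given by an ordering of the vertices for which the boundary of the first $k$ vertices stays close to its average $rk(n-k)/(n-1)\le rn/4$; such an ordering exists by a concentration argument, and Proposition~\ref{prop-ub-isoperimetric} can be used to anchor the midpoint strictly below $rn/4$. This bounds the maximal energy by $(r/2+C_2\sqrt r)n$, giving Condition {\bf (2)}. The same path bound, now used to escape an arbitrary $\sigma$ downhill towards whichever of $\boxminus,\boxplus$ lies below it, keeps every stability level below $\Gamma_u$ and so also yields Condition {\bf (3a)}.

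The delicate part is Condition {\bf (3b)}, which is what forces $r\ge6$ and the threshold $C_1<\sqrt3/2$. Computing the cost of a single flip shows that flipping a $+$ vertex with $d^+$ plus-neighbours costs $4d^+-2r+2h$, and flipping a $-$ vertex costs $2r-4d^+-2h$. Hence any $\sigma^A$ that is not a local minimum already has $V_\sigma=0$, and a genuine local minimum $\sigma^A$ with $A\ne\varnothing,[n]$ must have every vertex of $A$ with at least $(r-h)/2$ neighbours in $A$ and every vertex of $A^c$ with at most $(r-h)/2$ neighbours in $A$; both $A$ and $A^c$ are therefore internally dense. For such configurations I would exhibit an escape path to a strictly lower configuration by peeling off vertices in a favourable order, using for $r\ge6$ the vertex of $A$ with at least two neighbours in $A^c$ guaranteed by \eqref{eq-ieg1} (valid since then $i_e(G_n)>1$), and bounding the height gained along the way with the isoperimetric estimates.

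The main obstacle, and where the bulk of the work lies, is to carry out this last bound \emph{uniformly} over all internally dense sets $A$ and to optimise it so that every stability level stays strictly below $\Gamma_\ell$. It is precisely this optimisation that should produce the constant $\sqrt3/2$ and hence pin down the admissible window $\sqrt{\log 2}\le C_1<\sqrt3/2$: Condition {\bf (1)} pushes $C_1$ up to $\sqrt{\log 2}+C_0$, while Condition {\bf (3b)} requires $\Gamma_\ell$ to exceed the largest stability level of a non-metastable state, forcing $C_1<\sqrt3/2$. Once {\bf (3b)} is established, $\boxminus$ is the unique metastable state, and Proposition~\ref{prop-metastable-time} delivers the theorem.
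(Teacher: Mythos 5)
Your overall architecture coincides with the paper's: both reduce Theorem~\ref{thm-metastabletime} to Proposition~\ref{prop-metastable-time} with $\Gamma_\ell=(r/2-C_1\sqrt r)n$ and $\Gamma_u=(r/2+C_2\sqrt r)n$, and your verification of Condition {\bf (1)} is exactly Proposition~\ref{prop-lb-commheight}. Your route to Conditions {\bf (2)} and {\bf (3a)} --- a monotone path along a vertex ordering whose partial boundaries stay near $rk(n-k)/(n-1)$ --- is genuinely different from the paper's, which instead starts from a minimizing half-set $A^*$ and moves down to $\boxminus$ (and up to $\boxplus$) by repeated applications of Lemmas~\ref{lem-goingtolowerenergy1} and~\ref{lem-goingtolowerenergy2}. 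Your variant is workable: for fixed $r$ the martingale fluctuations of a random ordering are $O(r\sqrt{n\log n})=o(\sqrt r\, n)$, so such an ordering exists and {\bf (2)} follows, even without leaning on Proposition~\ref{prop-ub-isoperimetric}. For {\bf (3a)}, however, the stability level is measured from $H(\sigma)$, and there are configurations with $H(\sigma)<H(\boxminus)$ (e.g.\ $\sigma^{[n]\setminus i}$ for large $n$); you then need a conditional version of the concentration statement for paths constrained to start at a given $A$, and you pick up an additive error $H(\boxminus)-H(\sigma)\le 2hn\le 2C_0\sqrt r\,n$, harmless but unaddressed in your sketch.

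The genuine gap is Condition {\bf (3b)}, which you explicitly defer (``the main obstacle \dots\ where the bulk of the work lies''). The missing idea is the quantitative mechanism of Lemma~\ref{lem-goingtolowerenergy1}: from any $A$ with $|A|\le n/2$ one flips only $s=\left\lceil\frac{r+h-2i_e(G_n)}{r-h}|A|\right\rceil$ plus spins, each chosen with at least two minus neighbours (possible by~\eqref{eq-ieg1} and pigeonhole, since $|\partial_e A|>|A|$), at cost at most $2(r-4+h)$ per flip, and then performs only non-increasing flips (any $+$ with at least $(r+h)/2$ minus neighbours) until stuck; the terminal set $B$ satisfies $|\partial_e B|<\frac{r+h}{2}|B|$, hence $H(\sigma^B)<(r-h)(|A|-s)-|E_n|+hn$, and comparing with the isoperimetric bound $H(\sigma^A)\ge 2(i_e(G_n)-h)|A|-|E_n|+hn$ shows that this choice of $s$ already forces $H(\sigma^B)<H(\sigma^A)$. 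Since whp $r+h-2i_e(G_n)\le h+2\sqrt{\log 2}\sqrt r$, only $O(n/\sqrt r)$ costly flips are needed and $V_\sigma=O(\sqrt r)\,n$; by contrast, your density characterization of local minima combined with a path bound of the kind you use for {\bf (2)}/{\bf (3a)} gives at best $V_\sigma\lesssim (r/4)n$, which is below $\Gamma_\ell$ only for roughly $r\ge12$, so it cannot reach $r\ge6$. Finally, your attribution of the constant $\sqrt3/2$ is incorrect: it is not produced by the {\bf (3b)} optimisation but is simply $\sqrt r/2$ evaluated at $r=3$ --- the requirement $C_1<\sqrt3/2$ guarantees $\Gamma_\ell=(r/2-C_1\sqrt r)n>0$ for all $r\ge3$, and $C_0<\sqrt3/2-\sqrt{\log 2}$ guarantees $h<i_e(G_n)$ whp in Proposition~\ref{prop-lb-commheight}. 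What the {\bf (3b)} optimisation actually produces is the restriction $r\ge6$, via the inequality~\eqref{eq-conditionr}; with only the one-minus-neighbour bound~\eqref{eq-ub-PhisigmaAsigmaB} the analogous inequality fails for all $r\le10$, which is precisely why~\eqref{eq-ieg1} is indispensable there.
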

The condition that $C_1<\sqrt3/2$ ensures that $r/2-C_1\sqrt{r} > 0$ for all $r\geq3$. The proof of this theorem can be found in the next section, where we verify Conditions {\bf (1)}, {\bf (2)},  {\bf (3a)} and, for $r\geq6$, {\bf (3b)} with
\eq
\Gamma_\ell = (r/2-C_1\sqrt{r})n \qquad {\rm and} \qquad \Gamma_u = (r/2+C_2\sqrt{r})n.
\en
Unfortunately, we were not able to prove that $\boxminus$ is the unique metastable state for all $r\geq3$, although we expect this to be the case. We show that $V_\sigma \leq C_3 \sqrt{r} n$ for $\sigma \notin \{\boxminus,\boxplus\}$ and need to show that this is less than $(r/2-C_1\sqrt{r})n$, but for small values of $r$ our bounds on the constants are not sharp enough and hence, we can only prove this for $r\geq6$. As will be shown below the proof of Proposition~\ref{prop-condition3b}, the main reason for this is that we cannot use that $i_e(G_n)>1$ for $r\leq5$.

The condition that $h<C\sqrt{r}$ is only necessary to prove that the metastable time behaves like $\exp(\beta (r/2+ \mathcal{O}(\sqrt{r}))n)$ for $r\rightarrow\infty$. For $h<i_e(G_n)$, the same proof strategy can be used, but this comes at the expense that the bounds on the metastable time become less tight.

The above result holds for random $r$-regular graphs. It would be interesting to generalize this result to random graphs with more general degree distributions. For this, it will be necessary to get a more detailed understanding of the structure of components that consist of a positive fraction of the graph and what measures will be of interest. For example, the isoperimetric number does not always give useful information, since this is equal to zero for disconnected graphs such as the Erd\H{o}s-R\'enyi random graph. 

Our main result shows that, in the zero temperature limit and for fixed $n$, the metastable time is exponentially big in $n$. It is expected, however, that the metastable time grows exponentially in $n$ for all temperatures below the critical temperature. In~\cite{SchShl98}, for example, a setting where the temperature is below the critical temperature, but strictly positive and the external field is vanishing is studied in two dimensions. It would be interesting to investigate this further for the model on the random graph. It would also be interesting to see if it is possible to let $n$ and $\beta$ tend to infinity simultaneously. 

It would be interesting to see if the pathwise approach can also be used to describe the other problems on metastability and gain more insight in the structure of the critical droplet and the tube of typical trajectories. It would also be interesting to see if the potential theoretic approach and martingale approach can yield improved results about the metastability of the Ising model on random graphs.

%---------------------------------------------------------------------------------------------------
%---------------------------------------------------------------------------------------------------

\section{Proofs}
In the next three subsections we verify Conditions {\bf (1)}, {\bf (2)},  {\bf (3a)} and, for $r\geq6$, {\bf (3b)} with
\eq
\Gamma_\ell = (r/2-C_1\sqrt{r})n \qquad {\rm and} \qquad \Gamma_u = (r/2+C_2\sqrt{r})n.
\en
Theorem~\ref{thm-metastabletime} then follows from Proposition~\ref{prop-metastable-time}, where it should be observed that the $\varepsilon$ can be absorbed into the constants.

%---------------------------------------------------------------------------------------------------
%---------------------------------------------------------------------------------------------------

\subsection{Condition (1): lower bound on communication height}
We start by deriving a lower bound on the communication height between $\boxminus$ and $\boxplus$, verifying Condition~{\bf (1)}:
\begin{proposition}[Lower bound on communication height]\label{prop-lb-commheight}
Let $G_n$ be a graph and suppose that $0<h<i_e(G_n)$. Then,
\eq\label{eq-comheightie}
\Phi(\boxminus,\boxplus) - H(\boxminus) \geq (i_e(G_n)-h)n.
\en
If $G_n$ is a random $r$-regular graph with $r\geq 3$ and $0<h<C_0\sqrt{r}$ for some uniform constant $0<C_0<(\sqrt{3}/2-\sqrt{\log(2)})$, then there exists a uniform constant $0<C_1<\sqrt{3}/2$ so that, whp,
\eq
\Phi(\boxminus,\boxplus) - H(\boxminus) \geq (r/2-C_1\sqrt{r})n.
\en
\end{proposition}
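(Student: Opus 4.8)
The plan is to prove the statement in two parts, exactly matching the two displayed inequalities in the Proposition. The first part is a general graph statement relating the communication height to the isoperimetric number; the second specializes to random $r$-regular graphs by invoking the already-established lower bound on $i_e(G_n)$ from Proposition~\ref{prop-lb-isoperimetric}.

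For the first inequality, I would reason about the energy along any path $\omega = (\boxminus = \sigma^{A_0}, \sigma^{A_1}, \ldots, \sigma^{A_\ell} = \boxplus)$ from $\boxminus$ to $\boxplus$. Along such a path the set of $+1$ vertices grows from $\varnothing$ to $[n]$ one vertex at a time, so in particular there is some intermediate step $k$ at which $|A_k| = \lceil n/2 \rceil$ (or at which $A_k$ first reaches size $\lfloor n/2\rfloor$), since $|A_{k+1}\triangle A_k|=1$ forces the cardinalities to change by exactly one. The key is to compute $H(\sigma^A) - H(\boxminus)$ for a general set $A$. Using the Hamiltonian~\eqref{eq-hamiltonian} with $J=1$, flipping the spins in $A$ from $-1$ to $+1$ changes the field term by $-2h|A|$ and changes the interaction term by $+2|\partial_e A|$, since each edge in $\partial_e A$ now joins a $+$ and a $-$ spin (energy contribution $+1$ instead of $-1$), while edges internal to $A$ or to $A^c$ are unchanged. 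Hence
\eq
H(\sigma^A) - H(\boxminus) = 2|\partial_e A| - 2h|A|.
\en
Evaluating this at the intermediate set $A_k$ with $|A_k|\le n/2$ and using the definition of the isoperimetric number, $|\partial_e A_k| \ge i_e(G_n)|A_k|$, gives $H(\sigma^{A_k}) - H(\boxminus) \ge 2(i_e(G_n)-h)|A_k|$. Since every path must pass through such a configuration and the communication height is the min over paths of the max over the path, I would take $|A_k|$ as close to $n/2$ as the path allows; taking the set of size $\lceil n/2\rceil$ yields $\max_{\sigma''\in\omega} H(\sigma'') - H(\boxminus) \ge 2(i_e(G_n)-h)\cdot n/2 = (i_e(G_n)-h)n$, and the bound survives the minimization over $\omega$, giving~\eqref{eq-comheightie}. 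The hypothesis $h < i_e(G_n)$ is what makes this lower bound positive and meaningful.

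For the second inequality, I would simply substitute the whp lower bound from~\eqref{eq-lb-isoperimetric}, namely $i_e(G_n) \ge r/2 - \sqrt{\log 2}\,\sqrt{r}$, into~\eqref{eq-comheightie}. Combined with the assumption $h < C_0\sqrt{r}$, this gives, whp,
\eq
\Phi(\boxminus,\boxplus) - H(\boxminus) \ge \bigl(r/2 - (\sqrt{\log 2} + C_0)\sqrt{r}\bigr)n,
\en
so setting $C_1 = \sqrt{\log 2} + C_0$ works. The constraint $C_0 < \sqrt3/2 - \sqrt{\log 2}$ is exactly what guarantees $C_1 = \sqrt{\log 2} + C_0 < \sqrt3/2$, as required, and this ensures $r/2 - C_1\sqrt{r} > 0$ for all $r\ge 3$. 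One must also check that the hypothesis $h < i_e(G_n)$ needed for the first part holds whp under $h < C_0\sqrt{r}$: since $i_e(G_n) \ge r/2 - \sqrt{\log 2}\sqrt{r}$ whp and $C_0\sqrt{r} < (r/2 - \sqrt{\log 2}\sqrt{r})$ for $r\ge 3$ (because $C_0 + \sqrt{\log 2} < \sqrt3/2$ forces $C_0\sqrt r < (\sqrt3/2 - \sqrt{\log 2})\sqrt r \le r/2 - \sqrt{\log2}\sqrt r$ for $r \ge 3$), this is consistent.

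I do not expect a serious obstacle here; the content is essentially a clean energy-counting identity combined with an already-proved isoperimetric estimate. The one place that requires care is the bookkeeping of which intermediate set to pick along the path and confirming that a set of cardinality $\le n/2$ with the largest possible size is always traversed — this is immediate from the unit-step structure of paths, but it is the step on which the factor of $n$ (as opposed to something smaller) crucially depends, and it is also where the distinction between $i_e$ and $i_e'$ could in principle matter. Using $i_e(G_n)$ rather than $i_e'(G_n)$ in the bound $|\partial_e A_k|\ge i_e(G_n)|A_k|$ is the safe choice, since $i_e$ is a minimum over all sets of size at most $n/2$ and hence applies to the intermediate $A_k$ regardless of its exact cardinality.
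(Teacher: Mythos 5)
Your proposal is correct and follows essentially the same route as the paper's proof: the identity $H(\sigma^A)-H(\boxminus)=2|\partial_e A|-2h|A|$, the observation that every path from $\boxminus$ to $\boxplus$ must pass through a configuration with roughly $n/2$ plus spins, the isoperimetric bound $|\partial_e A|\geq i_e(G_n)|A|$, and then substitution of Bollob\'as's whp bound $i_e(G_n)\geq r/2-\sqrt{\log 2}\sqrt{r}$ with $C_1=\sqrt{\log 2}+C_0$. The one wrinkle in your write-up---that for odd $n$ the traversed set of size at most $n/2$ has cardinality $\lfloor n/2\rfloor$, so the bound $2(i_e(G_n)-h)\lfloor n/2\rfloor$ falls marginally short of $(i_e(G_n)-h)n$---is an imprecision shared by the paper's own proof, so it is not a gap relative to the paper.
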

\begin{proof}
Recall that $G_n=(V_n=[n], E_n)$. Hence, for any subset $A\subseteq [n]$ with $|A|\leq n/2$, it holds that
\begin{align}
H(\sigma^A) &= -\sum_{(i,j)\in E_n} \sigma^A_i \sigma^A_j-h \sum_{i\in [n]} \sigma^A_i =  - (|E_n|-|\partial_e A|) +|\partial_e A|-h|A|+ h(n-|A|) \nn\\
&= 2 |\partial_e A|-2 h|A|-|E_n|+hn.
\end{align}
Hence, by the definition of the isoperimetric number,
\eq\label{eq-energybnd-isoperimetric}
H(\sigma^A) \geq 2 (i_e(G_n)-h)|A|-|E_n|+hn.
\en
Note that every path from $\boxminus$ to $\boxplus$ has to go through a configuration with $\lfloor n/2 \rfloor$ plus spins. Using the above, the energy of any such configuration  is at least $(i_e(G)-h)n-|E_n|+hn.$ The first statement of the proposition now follows by observing that
\eq
H(\boxminus) = -|E_n|+hn.
\en

To prove the second statement, observe that whp $i_e(G_n)\geq r/2-\sqrt{\log 2}\sqrt{r}$, so that whp $h<i_e(G_n)$ if $h<(\sqrt{3}/2-\sqrt{\log(2)})\sqrt{r}$ for $r\geq3$. Hence, it follows from~\eqref{eq-comheightie} that, whp,
\eq
\Phi(\boxminus,\boxplus) - H(\boxminus) \geq (i_e(G_n)-h)n \geq (r/2-C_1\sqrt{r}),
\en
with $C_1=\sqrt{\log 2}+C_0<\sqrt{3}/2$. 
\end{proof}
\begin{remark}
The first statement of the proposition above holds for general graphs and not only for $r$-regular graphs. For other graphs, however, it does not always give useful information. If, for example, the graph is not connected, which is, e.g., the case whp in the Erd\H{o}s-R\'enyi random graph, then  $i_e(G_n)=0$. Also for lattices $i_e(G_n)\rightarrow 0$ for $n\rightarrow \infty$, and hence the above bound is not useful.
\end{remark}

%---------------------------------------------------------------------------------------------------
%---------------------------------------------------------------------------------------------------

\subsection{Condition (2): upper bound on communication height}
In the following lemma we compute when the energy decreases if we flip one spin at vertex $i$ from $+$ to $-$ and vice versa.
\begin{lemma}\label{lem-changingplusspinsforfree}
Suppose that $G_n$ is an $r$-regular graph. Let $A\subseteq [n]$. Then, for all $i\in A$, $H(\sigma^{A \setminus i}) \leq H(\sigma^A)$ if and only if
$|\partial_e i \setminus A| \geq (r+h)/2$. 

Furthermore, for all $i\in A^c$, $H(\sigma^{A \cup i}) \leq H(\sigma^A)$ if and only if
$|\partial_e i \setminus A^c| \geq (r-h)/2$.
\end{lemma}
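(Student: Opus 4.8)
The plan is to treat this as a purely local energy computation. Flipping the spin at a single vertex $i$ changes only the field term at $i$ and the interaction terms carried by the edges incident to $i$; every other term in $H$ is identical in the two configurations and cancels in the difference. So the whole lemma reduces to evaluating these few local contributions and rearranging an inequality using $r$-regularity.

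First I would fix $i\in A$ and compute $H(\sigma^{A\setminus i})-H(\sigma^A)$ directly from~\eqref{eq-hamiltonian}. Write $d_-=|\partial_e i\setminus A|$ for the number of neighbors of $i$ lying in $A^c$ and $d_+=|\partial_e i\setminus A^c|$ for the number lying in $A$; since the graph is simple and $r$-regular, $d_++d_-=r$. Flipping $\sigma_i$ from $+1$ to $-1$ decreases $\sum_k\sigma_k$ by $2$, so the field part $-h\sum_k\sigma_k$ increases by $2h$. On the edge side, each edge to a neighbor in $A$ changes its contribution $-\sigma_i\sigma_j$ from $-1$ to $+1$, a gain of $+2$, while each edge to a neighbor in $A^c$ changes from $+1$ to $-1$, a change of $-2$. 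Summing these gives
\eq
H(\sigma^{A\setminus i})-H(\sigma^A)=2d_+-2d_-+2h.
\en
The inequality $H(\sigma^{A\setminus i})\leq H(\sigma^A)$ is then equivalent to $d_+-d_-+h\leq0$, and substituting $d_+=r-d_-$ turns this into $2d_-\geq r+h$, i.e.\ $|\partial_e i\setminus A|\geq(r+h)/2$, which is the first claim.

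The second statement is entirely symmetric, so I would just indicate the sign changes rather than redo the computation. For $i\in A^c$ the flip from $-1$ to $+1$ makes the field term \emph{decrease} by $2h$ and reverses the two edge contributions, giving
\eq
H(\sigma^{A\cup i})-H(\sigma^A)=-2d_++2d_--2h,
\en
and setting this $\leq0$ while using $d_-=r-d_+$ yields $|\partial_e i\setminus A^c|\geq(r-h)/2$.

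There is no substantive obstacle here: the statement is an exact one-spin-flip identity, and the only thing to watch is the bookkeeping. Specifically, one must get the sign of the field contribution right for each flip direction (it points opposite ways for $+\!\to\!-$ and $-\!\to\!+$) and then consistently eliminate one of the two degree counts via $d_++d_-=r$. The $r$-regularity enters precisely at that last substitution; without it the thresholds would have to be written in terms of the individual degree of $i$ rather than as the clean values $(r\pm h)/2$.
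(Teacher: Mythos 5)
Your proof is correct and follows essentially the same route as the paper's: a direct one-spin-flip energy computation, splitting the edges at $i$ into those toward $A$ and toward $A^c$ and using $r$-regularity to eliminate one count, which reproduces the paper's identity $H(\sigma^{A\setminus i})-H(\sigma^A)=2(r-k_i^A)-2k_i^A+2h$ in your notation $2d_+-2d_-+2h$. Your treatment of the second statement is in fact slightly more explicit than the paper's, which merely states that the case $i\in A^c$ is ``similar.''
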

\begin{proof}
For $i\in A$, let $k_i^A = |\partial_e i \setminus A|$, the number of $-$ spins connected to vertex $i$ in configuration $\sigma^A$. Hence, $i$ is connected to $r-k_i^A$ $+$ spins. Thus, the difference in energy
\eq\label{eq-energydifonepllsuspinless}
H(\sigma^{A \setminus i}) - H(\sigma^A) = 2(r-k_i^A) - 2 k_i^A + 2h = - 4 k_i^A +2(r+h),
\en
since every edge connected to $i$ between two equal spins in $\sigma^A$ has one $+$ and one $-$ spin in $\sigma^{A \setminus i}$ and vice versa. This is indeed nonpositive iff $k_i^A \geq (r+h)/2$.

The proof for $i\in A^c$ is similar.
\end{proof}

\begin{remark}
If $(r+h)/2$ is noninteger, then $2(r+h)$ will not be divisible by $4$ and the energy difference by flipping one spin cannot be $0$, and hence under this extra condition the first statement of the lemma can be improved  to $H(\sigma^{A \setminus i}) < H(\sigma^A)$ if and only if $|\partial_e i \setminus A| \geq (r+h)/2$ and also in the second statement the analogous inequality can be made strict. This will however not lead to substantially better results in the rest of the paper, so we will not make this extra assumption.
\end{remark}

\begin{lemma}\label{lem-goingtolowerenergy1}
Suppose that $G_n$ is a connected $r$-regular graph and that $0<h<i_e(G_n)$. Then, for every set $A \subset [n]$ with $1\leq |A|\leq n/2$ there exists a set $B \subset A$ with $|B|<|A|$ such that
\eq
H(\sigma^B)<H(\sigma^A),
\en
and
\eq\label{eq-ub-PhisigmaAsigmaB}
\Phi(\sigma^A,\sigma^B)-H(\sigma^A) \leq 2 (r-2+h)s,
\en
with $s=\lceil\frac{r+h-2i_e(G_n)}{r-h}|A|\rceil$.

For $r\geq6$, the same holds, whp, with~\eqref{eq-ub-PhisigmaAsigmaB} replaced by
\eq\label{eq-ub-PhisigmaAsigmaB-rgeq6}
\Phi(\sigma^A,\sigma^B)-H(\sigma^A) \leq 2 (r-4+h)s.
\en
\end{lemma}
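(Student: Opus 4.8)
The plan is to exhibit a single monotone descending path---one that only flips plus spins to minus, so the plus-sets form a strictly decreasing chain $A=A_0\supsetneq A_1\supsetneq\cdots\supsetneq A_{|A|}=\varnothing$---and to read the communication height off the energies along it. First I would record the two ingredients already available: the identity $H(\sigma^{A'})=2|\partial_e A'|-2h|A'|-|E_n|+hn$ from the proof of Proposition~\ref{prop-lb-commheight}, and the fact (Lemma~\ref{lem-changingplusspinsforfree}, eq.~\eqref{eq-energydifonepllsuspinless}) that flipping a single $i\in A'$ with $k$ minus-neighbours changes the energy by $2(r+h)-4k$. Since $i_e(G_n)>h$, the identity gives $H(\boxminus)-H(\sigma^A)=-2(|\partial_e A|-h|A|)\le-2(i_e(G_n)-h)|A|<0$, so some proper subset $B\subsetneq A$---at worst $B=\varnothing$---has strictly smaller energy than $\sigma^A$; this is the target $B$. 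The elementary observation that makes the whole scheme work is that along any path the maximal energy exceeds the starting energy by at most the sum of the positive energy increments, since every prefix sum of increments is bounded by the total of the positive ones. Hence it suffices to build a descending path on which the positive increments sum to at most $2(r-2+h)s$ (respectively $2(r-4+h)s$).

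The path I would use is steepest descent: at each step flip to minus a vertex of the current plus-set $A'$ with the maximal number of minus-neighbours. This choice is tailored to control the cost of the uphill steps, i.e.\ those with $k<(r+h)/2$ and positive increment $2(r+h)-4k$. At every intermediate configuration one has $\varnothing\neq A'\subseteq A$, so $|A'|\le n/2$ and the isoperimetric bound applies: the average number of minus-neighbours over $A'$ is $|\partial_e A'|/|A'|\ge i_e(G_n)$, whence the greedily chosen vertex has $k\ge i_e(G_n)$. Because $G_n$ is connected and $A'\neq[n]$ we also have $\partial_e A'\neq\varnothing$, so $k\ge1$ and each uphill step costs at most $2(r+h)-4=2(r-2+h)$. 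For $r\ge6$ I would instead invoke $i_e(G_n)>1$ from~\eqref{eq-ieg1}, which forces $k\ge2$ and hence an uphill cost of at most $2(r+h)-8=2(r-4+h)$; this is the only modification needed for~\eqref{eq-ub-PhisigmaAsigmaB-rgeq6}.

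It remains to bound the total uphill contribution by $2(r-2+h)s$, and this is the crux. Writing $g(t)=H(\sigma^{A_t})-H(\sigma^A)=2\big(|\partial_e A_t|-|\partial_e A|+ht\big)$, the path realizes $\Phi(\sigma^A,\sigma^B)-H(\sigma^A)\le\max_t g(t)$, and I would estimate $\max_t g(t)$ by balancing two effects: each uphill step raises $|\partial_e A_t|+ht$ by at most $r+h-2i_e(G_n)$ (as the flipped vertex satisfies $k\ge i_e(G_n)$), while the isoperimetric lower bound $|\partial_e A_t|\ge i_e(G_n)(|A|-t)$ pulls $g$ back down and guarantees that the energy drops below $H(\sigma^A)$ before the chain reaches $\boxminus$. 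Extracting from this balance precisely the factor $\tfrac{r+h-2i_e(G_n)}{r-h}$ appearing in $s=\lceil\tfrac{r+h-2i_e(G_n)}{r-h}|A|\rceil$ is the delicate point, requiring one to stop the descent at the correct configuration and to apply the isoperimetric inequality at the corresponding intermediate size rather than only at $A$. I expect this sharp accounting to be the main obstacle: a crude global comparison of the positive and negative increments over the full descent to $\boxminus$ only yields a weaker bound on the number of uphill steps, and it is the monotonicity built into steepest descent, combined with the per-size isoperimetric estimate, that should be needed to recover the constant $s$.
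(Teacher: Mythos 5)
Your proposal assembles the right ingredients (the energy identity, the single-flip cost $2(r+h)-4k$, the per-step caps $2(r-2+h)$ for $k\geq1$ and $2(r-4+h)$ when $i_e(G_n)>1$, and the fallback $H(\boxminus)<H(\sigma^A)$), but the central estimate is left unproven, and the mechanism you hope will deliver it points the wrong way. You concede this yourself: the claim that the steepest-descent path has maximum energy excess at most $2(r-2+h)s$ with $s=\lceil\tfrac{r+h-2i_e(G_n)}{r-h}|A|\rceil$ \emph{is} the content of~\eqref{eq-ub-PhisigmaAsigmaB}, and your ``balancing'' sketch does not establish it. Worse, the per-size isoperimetric bound $|\partial_e A_t|\geq i_e(G_n)(|A|-t)$ is a \emph{lower} bound on the boundary, hence a lower bound on $g(t)=2\big(|\partial_e A_t|-|\partial_e A|+ht\big)$: it forces the energy along the path \emph{up}, it cannot ``pull $g$ back down''. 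Indeed this is exactly how isoperimetry is used in Proposition~\ref{prop-lb-commheight} to prove the \emph{lower} bound on the communication height, so it cannot simultaneously serve as the tool that caps $\max_t g(t)$ from above. Nothing in the greedy rule bounds the \emph{number} of uphill steps by $s$, and retreating to the fallback target $B=\varnothing$ does not help, since reaching $\varnothing$ requires traversing the whole path whose maximum you cannot control.

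The missing idea is the paper's two-phase construction with a stopping rule, which makes the path bound true by design rather than by analysis of a greedy descent. Phase one: flip exactly $s$ plus spins, each chosen with at least one minus neighbour (at least two when $i_e(G_n)>1$, which holds whp for $r\geq6$ and yields~\eqref{eq-ub-PhisigmaAsigmaB-rgeq6}); these are the only steps that can raise the energy, each by at most $2(r-2+h)$ (resp.\ $2(r-4+h)$), so the maximal excess is at most $2(r-2+h)s$ automatically. Phase two: flip only plus spins with at least $(r+h)/2$ minus neighbours --- non-increasing by Lemma~\ref{lem-changingplusspinsforfree} --- and stop at $\sigma^B$ when no such spin exists. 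The stopping condition supplies the complementary \emph{upper} bound $|\partial_e B|<\tfrac{r+h}{2}|B|$, whence $H(\sigma^B)<(r-h)(|A|-s)-|E_n|+hn$; comparing this with the isoperimetric lower bound $H(\sigma^A)\geq 2(i_e(G_n)-h)|A|-|E_n|+hn$ (valid since $|A|\leq n/2$) gives $H(\sigma^B)<H(\sigma^A)$ precisely when $s\geq\tfrac{r+h-2i_e(G_n)}{r-h}|A|$. That comparison of endpoint energies --- not any accounting of increments along the path --- is where the constant in $s$ comes from, and it is the step your proposal never reaches.
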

\begin{proof}
Given a set $A\subset[n]$, we construct the set $B$ as follows. We start from configuration $\sigma^A$ and then flip $s$ times a $+$ spin to $-$, where each step is a single spin-flip, choosing a random $+$ with at least one $-$ neighbor every step. If there is at least one $+$ spin left, such a $+$ spin always exists, since the graph is connected and $|A|\leq n/2$. If there are no $+$ spins left we are done, because it follows from~\eqref{eq-energybnd-isoperimetric} that $H(\boxminus)<H(\sigma^A)$. Every step the energy can go up at most $2(r-1)-2+2h = 2(r-2+h)$.

After changing these $s$ spins from $+$ to $-$,  we keep changing spins from $+$ to $-$, but now selecting a spin at random at every step that has at least $(r+h)/2$ $-$ neighbors. We continue doing this until such spins do not exist anymore. From Lemma~\ref{lem-changingplusspinsforfree} it follows that the energy can not go up in any of these these steps. We call the remaining configuration $\sigma^B$.

It is obvious that $|B|\leq|A|-s<|A|$. From the above it also follows that $\Phi(\sigma^A,\sigma^B)-H(\sigma^A) \leq 2(r-2+h)s$. It thus remains to choose $s$ big enough so that we are sure that $H(\sigma^B)<H(\sigma^A)$. For this, note that
\begin{align}
H(\sigma^B) &= 2 |\partial_e B| - 2h |B| -|E|+hn \nn\\
&< 2 \left(\frac{r+h}{2} -h\right)|B| -|E|+hn \leq (r-h)(|A|-s) -|E|+hn,
\end{align}
where in the first inequality we used that every vertex in $B$ has strictly less than $(r+h)/2$ $-$ neighbors by construction.

Hence, it follows from~\eqref{eq-energybnd-isoperimetric} that we need to choose $s$ such that
\eq
(r-h)(|A|-s) \leq 2 (i_e(G)-h)|A|,
\en
which is equivalent to
\eq
s \geq \frac{r+h-2i_e(G)}{r-h}|A|.
\en

As suggested to the author by Oliver Jovanovski, the bound~\eqref{eq-ub-PhisigmaAsigmaB} can be improved by using that $i_e(G_n)>1$. This is true, whp, for $r\geq6$ as mentioned in Proposition~\ref{prop-lb-isoperimetric}. As observed below Proposition~\ref{prop-lb-isoperimetric}, in this case there is always a $+$ spin that is connected to at least {\em two} $-$ neighbors. By choosing such a spin at random during the first $s$ spin flips, the energy can go up at most $2(r-2)-4+2h = 2(r-4+h)$ at every step.

The rest of the proof is analogous to the above. Note that the value of $s$ does not change, since this only makes use of the fact that in the end there are no longer $+$ spins with at least $(r+h)/2$ $-$ neighbors.
\end{proof}

\begin{remark}
In the above lemma we made essential use of the fact that all degrees are equal to get a bound on $|\partial_e B|$ in terms of $|B|$.
\end{remark}
We can make a similar statement if already more than half the spins are $+$.
\begin{lemma}\label{lem-goingtolowerenergy2}
Suppose that $G_n$ is a connected $r$-regular graph and that $h>0$. Then, for every set $A \subset [n]$ with $n/2 \leq |A|< n$ there exists a set $B \supset A$ with $|B|>|A|$ such that
\eq
H(\sigma^B)<H(\sigma^A),
\en
and
\eq\label{eq-ub-PhisigmaAsigmaB2}
\Phi(\sigma^A,\sigma^B)-H(\sigma^A) \leq 2 (r-2-h)s,
\en
with $s=\lceil\frac{r-h-2i_e(G_n)}{r+h}|A^c|\rceil$.
For $r\geq6$, the same holds, whp, with~\eqref{eq-ub-PhisigmaAsigmaB2} replaced by 
\eq\label{eq-ub-PhisigmaAsigmaB2-rgeq6}
\Phi(\sigma^A,\sigma^B)-H(\sigma^A) \leq 2 (r-4-h)s.
\en
\end{lemma}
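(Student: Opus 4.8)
The plan is to run the construction of Lemma~\ref{lem-goingtolowerenergy1} ``upside down'', exchanging the roles of the $+$ and $-$ spins and of $A$ and $A^c$. Since $|A|\geq n/2$ we have $|A^c|\leq n/2$, so the isoperimetric number controls $A^c$. Starting from $\sigma^A$, I would first perform $s$ single spin-flips, each time picking a vertex in the current minus-set that has at least one $+$ neighbor and flipping it to $+$; such a vertex exists by connectivity as long as any $-$ spins remain, and if the $-$ spins are exhausted we have reached $\boxplus$, for which $H(\boxplus)<H(\sigma^A)$ whenever $A\neq[n]$, and we are done. After these $s$ flips I would keep flipping $-$ spins to $+$, but now only those with at least $(r-h)/2$ $+$ neighbors; by the second statement of Lemma~\ref{lem-changingplusspinsforfree} such flips never raise the energy. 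I stop when no such $-$ spin remains and call the resulting configuration $\sigma^B$; by construction $B\supset A$ and every vertex of $B^c$ has strictly fewer than $(r-h)/2$ $+$ neighbors.

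For the communication-height bound I would compute the energy cost of flipping a $-$ spin $i$ with $m_i$ $+$ neighbors to $+$: this equals $2r-4m_i-2h$ (the computation underlying Lemma~\ref{lem-changingplusspinsforfree}). With $m_i\geq 1$ this is at most $2(r-2-h)$, so each of the first $s$ steps raises the energy by at most $2(r-2-h)$ while the subsequent free flips do not raise it at all; hence $\Phi(\sigma^A,\sigma^B)-H(\sigma^A)\leq 2(r-2-h)s$, which is \eqref{eq-ub-PhisigmaAsigmaB2}. For $r\geq6$, Proposition~\ref{prop-lb-isoperimetric} gives $i_e(G_n)>1$ whp, and applying the property noted below it to $A^c$ (valid since $|A^c|\leq n/2$) guarantees a $-$ spin with at least two $+$ neighbors at every stage of the first phase; choosing such vertices makes each of the first $s$ steps cost at most $2r-8-2h=2(r-4-h)$, giving \eqref{eq-ub-PhisigmaAsigmaB2-rgeq6}.

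It remains to choose $s$ so that $H(\sigma^B)<H(\sigma^A)$, and the value of $s$ is the same in both cases since it depends only on the stopping rule of the free-flip phase. Using $|\partial_e B|=|\partial_e B^c|$ together with the fact that each vertex of $B^c$ has fewer than $(r-h)/2$ $+$ neighbors gives $|\partial_e B|<\tfrac{r-h}{2}|B^c|$, and substituting into $H(\sigma^B)=2|\partial_e B|-2h|B|-|E_n|+hn$ with $|B|=n-|B^c|$ yields $H(\sigma^B)<(r+h)|B^c|-hn-|E_n|\leq(r+h)(|A^c|-s)-hn-|E_n|$, where the last step uses $|B^c|\leq|A^c|-s$. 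On the other side I would apply the isoperimetric inequality to $A^c$, that is $|\partial_e A|=|\partial_e A^c|\geq i_e(G_n)|A^c|$, to obtain the lower bound $H(\sigma^A)\geq 2(i_e(G_n)+h)|A^c|-hn-|E_n|$ (the analogue of \eqref{eq-energybnd-isoperimetric} for $A^c$). Requiring $(r+h)(|A^c|-s)\leq 2(i_e(G_n)+h)|A^c|$ then forces $s\geq\tfrac{r-h-2i_e(G_n)}{r+h}|A^c|$, which is exactly the stated choice of $s$.

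The bulk of the work is purely mechanical mirroring of Lemma~\ref{lem-goingtolowerenergy1}, so I expect no deep obstacle; the one place demanding care is sign-bookkeeping, namely consistently tracking that flipping $-$ to $+$ now \emph{gains} $-2h$ from the field rather than losing it, that the relevant threshold becomes $(r-h)/2$ $+$ neighbors, and that the isoperimetric inequality must be applied to $A^c$ (legitimate precisely because $|A^c|\leq n/2$). Getting these signs right is what makes the final inequality collapse to $s=\lceil\tfrac{r-h-2i_e(G_n)}{r+h}|A^c|\rceil$ rather than to a different constant.
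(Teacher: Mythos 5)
Your proposal is correct and is exactly the argument the paper intends: for this lemma the paper only writes that ``the proof is similar to that of Lemma~\ref{lem-goingtolowerenergy1}'', and your write-up is precisely that mirrored construction, with the roles of $+$ and $-$ exchanged, the free-flip threshold correctly becoming $(r-h)/2$ $+$ neighbors via the second statement of Lemma~\ref{lem-changingplusspinsforfree}, the isoperimetric inequality applied to $A^c$ (legitimate since $|A^c|\leq n/2$), and the $r\geq 6$ improvement obtained from the same $i_e(G_n)>1$ observation applied to the current minus-set. All the sign bookkeeping and the resulting value of $s$ check out, so nothing needs to be added.
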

The proof is similar to that of Lemma~\ref{lem-goingtolowerenergy1}. We can now verify Condition {\bf (2)}:

\begin{proposition}[Upper bound on communication height]\label{prop-ub-comm-height}
Suppose that $G_n$ is a connected $r$-regular graph, $0<h<i_e(G_n)$ and $n$ is large enough. Then,
\eq
\Phi(\boxminus,\boxplus)- H(\boxminus)  \leq (i_e'(G_n)-h) n + 2(r-2+h)\left\lceil\frac{r+h-2i_e(G_n)}{r-h}\frac{n}{2}\right\rceil. 
\en
If $G_n$ is a random $r$-regular graph and $0<h<C_0 \sqrt{r}$ for $C_0>0$ small enough, then, whp,
\eq
\Phi(\boxminus,\boxplus)- H(\boxminus)  \leq (r/2+C_2\sqrt{r})n,
\en
for some constant $C_2<\infty$.
\end{proposition}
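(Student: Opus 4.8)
The plan is to build one explicit path from $\boxminus$ to $\boxplus$ through the ``critical'' configuration $\sigma^{A^*}$, and to bound $\Phi(\boxminus,\boxplus)$ by the maximal energy along it. Here $A^*\subset[n]$ is a set with $|A^*|=\lfloor n/2\rfloor$ attaining the minimum defining $i_e'(G_n)$, so that $|\partial_e A^*|=i_e'(G_n)\,|A^*|$. Recalling the energy identity $H(\sigma^A)=2|\partial_e A|-2h|A|-|E_n|+hn$ and $H(\boxminus)=-|E_n|+hn$, I first get
\[
H(\sigma^{A^*})-H(\boxminus)=2\bigl(i_e'(G_n)-h\bigr)\lfloor n/2\rfloor\le\bigl(i_e'(G_n)-h\bigr)n,
\]
where the inequality uses $h<i_e(G_n)\le i_e'(G_n)$. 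This is the first term of the asserted bound.

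To control the two halves of the path I use that the communication height is symmetric, $\Phi(\sigma,\sigma')=\Phi(\sigma',\sigma)$ (reversing a path preserves its maximal energy), so that I may \emph{descend in energy} from $\sigma^{A^*}$ in both directions. Applying Lemma~\ref{lem-goingtolowerenergy1} to $A^*$ and iterating produces a strictly decreasing chain $A^*=A_0\supsetneq A_1\supsetneq\cdots$ ending at $\varnothing$, i.e.\ at $\boxminus$; termination is guaranteed because each step strictly lowers the energy and $H(\boxminus)<H(\sigma^{A_k})$ once no $+$ spins remain. The crucial point is that along this chain both $H(\sigma^{A_k})$ (strictly decreasing) and $s_k=\lceil\frac{r+h-2i_e(G_n)}{r-h}|A_k|\rceil$ (non-increasing, as $|A_k|$ shrinks) are monotone, so the per-step estimate $\Phi(\sigma^{A_k},\sigma^{A_{k+1}})-H(\sigma^{A_k})\le 2(r-2+h)s_k$ is maximal at $k=0$. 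Hence the entire descent stays below $H(\sigma^{A^*})+2(r-2+h)s_0$ with $s_0\le\lceil\frac{r+h-2i_e(G_n)}{r-h}\frac n2\rceil$. For the other half I argue identically with Lemma~\ref{lem-goingtolowerenergy2}, obtaining the analogous bound with the smaller prefactor $2(r-2-h)$ and $s_0'=\lceil\frac{r-h-2i_e(G_n)}{r+h}|A^{*c}|\rceil$; since $r-2-h\le r-2+h$ and $\frac{r-h-2i_e(G_n)}{r+h}\le\frac{r+h-2i_e(G_n)}{r-h}$ (the difference of the cross-products equals $4h(r-i_e(G_n))>0$), this half is dominated by the first. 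Concatenating the two descents at $\sigma^{A^*}$ gives a $\boxminus$--$\boxplus$ path of maximal energy at most $H(\sigma^{A^*})+2(r-2+h)s_0$, which yields the first displayed inequality of the proposition.

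For the random regular case I insert the isoperimetric estimates, valid whp: $i_e'(G_n)\le r/2-C\sqrt r$ by~\eqref{eq-ub-ieprime} and $i_e(G_n)\ge r/2-\sqrt{\log 2}\sqrt r$ by~\eqref{eq-lb-isoperimetric}. The first term obeys $(i_e'(G_n)-h)n\le(r/2-C\sqrt r)n$. For the second, $r+h-2i_e(G_n)\le(2\sqrt{\log2}+C_0)\sqrt r$ and $r-h\ge r-C_0\sqrt r$, so $\frac{r+h-2i_e(G_n)}{r-h}\le C'/\sqrt r$ for $r$ large, while $2(r-2+h)\le 2r$; multiplying and absorbing the ceiling's $+1$ into a lower-order $\Theta(r)$ term bounds the second term by $C''\sqrt r\,n$. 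Adding the two gives $(r/2+C_2\sqrt r)n$ with $C_2=C''-C<\infty$.

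The step I expect to require the most care is the monotonicity bookkeeping that pins the maximum of each staircase at its first step, together with the matching check that the ascending half is genuinely dominated by the descending one. A related nuisance is parity: Lemma~\ref{lem-goingtolowerenergy2} needs $|A|\ge n/2$, which fails at $A^*$ when $n$ is odd, since then $|A^*|=(n-1)/2$. I would resolve this by prefixing the ascent with a single spin flip to a set of size $\lceil n/2\rceil$, whose $O(r)$ energy cost is lower order in $n$ and is absorbed into the constants via the ``$n$ large enough'' hypothesis. Finally, one should note that in the regime $i_e(G_n)=r/2-\Theta(\sqrt r)$, $h=O(\sqrt r)$ the quantities $s_0,s_0'$ are the intended nonnegative integers; whenever a coefficient is non-positive the corresponding $s$ is $0$ and the bound only improves.
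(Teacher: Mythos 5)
Your proposal is correct and follows essentially the same route as the paper: the same minimizer $A^*$ of size $\lfloor n/2\rfloor$, the same iterated descents via Lemmas~\ref{lem-goingtolowerenergy1} and~\ref{lem-goingtolowerenergy2} with the maximum pinned at the first step, the same single extra spin flip to fix the parity issue at $A^*$ (absorbed using $n$ large, exactly as in~\eqref{eq-compare-bds-gln2}), and the same insertion of Propositions~\ref{prop-lb-isoperimetric} and~\ref{prop-ub-isoperimetric} for the whp bound. Your cross-product verification that the ascending half is dominated by the descending half is just a slightly more explicit version of the paper's use of $r+h>r-h$, so no substantive difference remains.
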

\begin{proof}
Let $\mathcal{A}$ be a set of subsets satisfying
\eq
\mathcal{A}=\argmin_{\stackrel{A \subset [n]}{|A| = \lfloor n/2 \rfloor}} \frac{|\partial_e A|}{|A|},
\en
and let $A^*$ be an arbitrary element of $\mathcal{A}$. Then,
\eq
H(\sigma^{A^*})\leq (i_e'(G_n)-h) n -|E_n|+hn.
\en
From Lemma~\ref{lem-goingtolowerenergy1} it follows that there exists a set $B^*\subset A^*$ such that $|B^*|<|A^*|$ with $H(\sigma^{B^*})<H(\sigma^{A^*})$ and 
\eq
\Phi(\sigma^{A^*},\sigma^{B^*}) \leq H(\sigma^{A^*}) + 2 (r-2+h)\left\lceil\frac{r+h-2i_e(G_n)}{r-h}\frac{n}{2}\right\rceil.
\en
Now, it follows from Lemma~\ref{lem-goingtolowerenergy1} again that there exists a set $B'\subset B^*$ such that $|B'|<|B^*|$ with $H(\sigma^{B'})<H(\sigma^{B^*})$ and 
\begin{align}
\Phi(\sigma^{B^*},\sigma^{B'})&\leq H(\sigma^{B^*}) + 2 (r-2+h)\left\lceil\frac{r+h-2i_e(G_n)}{r-h}|B^*|\right\rceil\nn\\
&<H(\sigma^{A^*}) + 2 (r-2+h)\left\lceil\frac{r+h-2i_e(G_n)}{r-h}\frac{n}{2}\right\rceil.
\end{align}
We apply this recursively until we reach the set $\boxminus$. Hence,
\eq
\Phi(\boxminus,\sigma^{A^*}) -H(\boxminus)= \Phi(\sigma^A,\boxminus)-H(\boxminus) \leq (i_e'(G_n)-h) n + 2 (r-2+h)\left\lceil\frac{r+h-2i_e(G_n)}{r-h}\frac{n}{2}\right\rceil.
\en

To bound $\Phi(\sigma^{A^*},\boxplus)$ we first flip one $-$ spin in $\sigma^{A^*}$ that is connected to at least one $+$ to $+$ by which the energy can go up at most $2(r-2-h)$. If we call the resulting configuration $\sigma^{A'}$, then we are sure that $|A'|\geq n/2$, so that we can use Lemma~\ref{lem-goingtolowerenergy2} and similar arguments as above to show that 
\eq
\Phi(\sigma^{A'},\boxplus) -H(\boxminus) \leq (i_e'(G_n)-h) n + 2 (r-2-h)\left\lceil\frac{r-h-2i_e(G_n)}{r+h}\frac{n}{2}\right\rceil,
\en
and hence that
\eq
\Phi(\sigma^{A^*},\boxplus) -H(\boxminus) \leq (i_e'(G_n)-h) n + 2 (r-2-h)\left\lceil\frac{r-h-2i_e(G_n)}{r+h}\frac{n}{2}+1\right\rceil.
\en
Note that we can choose $n$ large enough so that $(r+h)/n<h$. Hence,
\eq\label{eq-compare-bds-gln2}
\frac{r-h-2i_e(G_n)}{r+h}\frac{n}{2}+1 = \frac{r-h+ 2/n(r+h)-2i_e(G_n)}{r+h}\frac{n}{2} \leq \frac{r+h-2i_e(G_n)}{r-h}\frac{n}{2},
\en
where we also used that $r+h>r-h$. The first statement of the proposition now follows by observing that
\eq
\Phi(\boxminus,\boxplus) -H(\boxminus) \leq \max\big(\Phi(\boxminus,\sigma^{A^*}),\Phi(\sigma^{A^*},\boxplus)\big) -H(\boxminus).
\en

To obtain the second result we use Propositions~\ref{prop-lb-isoperimetric} and~\ref{prop-ub-isoperimetric}. Let $C>0$ be a constant such that~\eqref{eq-ub-ieprime} holds. Then, whp,
\begin{align}
\Phi(\boxminus,\boxplus) -H(\boxminus)&\leq (r/2-C\sqrt{r}-h) n + 2 (r-2+h)\left\lceil\frac{r+h-2(r/2-\sqrt{\log 2}\sqrt{r})}{r-h}\frac{n}{2}\right\rceil.\nn\\
&\leq r/2 n +\frac{r+h}{r-h}\left(h+2\sqrt{\log 2}\sqrt{r}+2/n(r-h)\right)n.
\end{align}
If $h\leq C_0\sqrt{r}$ for some constant $C_0$ small enough, then also $h\leq r/2$, so that $(r+h)/(r-h)\leq3$. We can also choose $n$ large enough so that $2/n(r-h)<h$. 
Hence, whp,
\eq
\Phi(\boxminus,\boxplus) -H(\boxminus)\leq r/2 n +3\left(2C_0\sqrt{r}+2\sqrt{\log 2}\sqrt{r}\right)n
\leq (r/2+C_2\sqrt{r}) n,
\en
for some $C_2<\infty$.
\end{proof}
Note that for $r\geq6$ the value of $C_2$ can be improved a bit by using~\eqref{eq-ub-PhisigmaAsigmaB-rgeq6} and~\eqref{eq-ub-PhisigmaAsigmaB2-rgeq6} instead of~\eqref{eq-ub-PhisigmaAsigmaB} and~\eqref{eq-ub-PhisigmaAsigmaB2}, respectively.
%---------------------------------------------------------------------------------------------------
%---------------------------------------------------------------------------------------------------

\subsection{Conditions 3(a) and 3(b): upper bounds on stability levels}
It remains to verify Conditions~{\bf (3a)} and~{\bf (3b)}, which we do next, again using Lemma's~\ref{lem-goingtolowerenergy1} and~\ref{lem-goingtolowerenergy2}. We start with Condition~{\bf (3a)}:

\begin{proposition}[First upper bound on stability levels]
Suppose that $G_n$ is a connected $r$-regular graph and that $0<h<i_e(G_n)$. Then, for all $\sigma \notin \{\boxminus,\boxplus\}$,
\eq
V_\sigma \leq 2 (r-2+h)\left\lceil\frac{r+h-2i_e(G_n)}{r-h}\frac{n}{2}\right\rceil.
\en
If $G_n$ is a random $r$-regular graph with $r\geq3$ and $h<C_0 \sqrt{r}$ for $C_0>0$ small enough, then, whp, for all $\sigma \notin \{\boxminus,\boxplus\}$,
\eq
V_\sigma  \leq (r/2+C_2\sqrt{r})n,
\en
for some constant $C_2<\infty$.
\end{proposition}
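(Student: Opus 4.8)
The plan is to represent every intermediate configuration as $\sigma^A$ and to extract a lower-energy target, together with a controlled communication cost, directly from Lemmas~\ref{lem-goingtolowerenergy1} and~\ref{lem-goingtolowerenergy2}. Since $\sigma\notin\{\boxminus,\boxplus\}$, I write $\sigma=\sigma^A$ with $A=\{i:\sigma_i=+1\}$ and $1\le|A|\le n-1$. Recalling that $\Phi(\sigma,\sigma')\ge H(\sigma)$ for every $\sigma'$ and that $V_\sigma$ is a minimum over competitors $\sigma'$ of strictly smaller energy, it suffices to exhibit a single such $\sigma'$ with a good bound on $\Phi(\sigma,\sigma')-H(\sigma)$. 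I split into two cases according to whether $|A|\le n/2$ or $|A|>n/2$.

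If $1\le|A|\le n/2$, I apply Lemma~\ref{lem-goingtolowerenergy1} to produce $B\subset A$ with $H(\sigma^B)<H(\sigma^A)$ and $\Phi(\sigma^A,\sigma^B)-H(\sigma^A)\le 2(r-2+h)s$, where $s=\lceil\frac{r+h-2i_e(G_n)}{r-h}|A|\rceil$. Taking $\sigma'=\sigma^B$ as competitor gives $V_\sigma\le 2(r-2+h)s$. The coefficient $\frac{r+h-2i_e(G_n)}{r-h}$ is nonnegative whenever $i_e(G_n)\le(r+h)/2$ (for the random graph this follows from $i_e(G_n)\le r/2$, Proposition~\ref{prop-ub-isoperimetric}), so $s$ is nondecreasing in $|A|$, and $|A|\le n/2$ yields the claimed bound $V_\sigma\le 2(r-2+h)\lceil\frac{r+h-2i_e(G_n)}{r-h}\frac{n}{2}\rceil$.

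If $n/2\le|A|\le n-1$, I instead apply Lemma~\ref{lem-goingtolowerenergy2} to get $B\supset A$ with $H(\sigma^B)<H(\sigma^A)$ and $\Phi(\sigma^A,\sigma^B)-H(\sigma^A)\le 2(r-2-h)s'$, where $s'=\lceil\frac{r-h-2i_e(G_n)}{r+h}|A^c|\rceil$ and $|A^c|\le n/2$. To see that this branch never beats the previous one, I combine $r-2-h\le r-2+h$ with the comparison $\frac{r-h-2i_e(G_n)}{r+h}\le\frac{r+h-2i_e(G_n)}{r-h}$; since both denominators are positive this reduces to $4h(r-i_e(G_n))\ge0$, which holds as $i_e(G_n)\le r$, and it is exactly the inequality already used in~\eqref{eq-compare-bds-gln2}. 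By monotonicity of the ceiling and $|A^c|\le n/2$, the case-two bound is dominated by the case-one bound (the degenerate subcases where $s'\le0$ force a monotone descent, hence $V_\sigma=0$, and are harmless). This establishes the first, graph-independent statement.

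For the second statement I pass to the random graph, where $G_n$ is connected whp, and insert the isoperimetric estimate $i_e(G_n)\ge r/2-\sqrt{\log 2}\sqrt{r}$ from~\eqref{eq-lb-isoperimetric}, valid whp, so that $r+h-2i_e(G_n)\le h+2\sqrt{\log 2}\sqrt{r}$. With $h\le C_0\sqrt{r}$ for $C_0$ small one has $r-h\ge r/2$ and $2(r-2+h)\le 2r+2C_0\sqrt{r}$; bounding the ceiling by its argument plus one then produces a main term of order $\sqrt{r}\,n$ and a ceiling correction of order $r$, the latter absorbed into the $(r/2)n$ budget for the large $n$ implicit in the whp statement, so that $V_\sigma\le(r/2+C_2\sqrt{r})n$ for a suitable $C_2<\infty$. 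This final estimate runs exactly parallel to the second part of Proposition~\ref{prop-ub-comm-height}. The main (and essentially only genuine) obstacle is the two-case comparison: one must confirm that the maximiser of the bound sits at $|A|=n/2$ and that the $|A|>n/2$ branch cannot exceed it, which is precisely where the coefficient comparison $r-2-h\le r-2+h$ and the fraction inequality $\frac{r-h-2i_e(G_n)}{r+h}\le\frac{r+h-2i_e(G_n)}{r-h}$ enter.
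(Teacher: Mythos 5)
Your proof is correct and follows essentially the same route as the paper: split into $|A|\le n/2$ and $|A|\ge n/2$, invoke Lemma~\ref{lem-goingtolowerenergy1} and Lemma~\ref{lem-goingtolowerenergy2} respectively, dominate the second branch via the comparison $\frac{r-h-2i_e(G_n)}{r+h}\le\frac{r+h-2i_e(G_n)}{r-h}$ (the content of~\eqref{eq-compare-bds-gln2}), and then insert the Bollob\'as bound~\eqref{eq-lb-isoperimetric} and $h\le C_0\sqrt{r}$ as in Proposition~\ref{prop-ub-comm-height}. You merely spell out details the paper leaves implicit (the cross-multiplication check and the degenerate $s\le 0$ cases), so there is nothing to correct.
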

\begin{proof}
The first inequality follows from Lemma~\ref{lem-goingtolowerenergy1} if $\sigma=\sigma^A$ with $|A|\leq n/2$ and from Lemma~\ref{lem-goingtolowerenergy2} and~\eqref{eq-compare-bds-gln2} if $\sigma=\sigma^A$ with $|A|\geq n/2$. 

Hence, it follows from Proposition~\ref{prop-lb-isoperimetric} that, whp,
\eq\label{eq-ub-V}
V_\sigma \leq 2 (r-2+h)\left\lceil\frac{r+h-2i_e(G_n)}{r-h}\frac{n}{2}\right\rceil \leq 2 (r-2+h)\left\lceil\frac{h+2\sqrt{\log 2}\sqrt{r}}{r-h}\frac{n}{2}\right\rceil.
\en
This can be bounded from above by $C_2\sqrt{r}n$ as in Proposition~\ref{prop-ub-comm-height}. Hence, whp,
\eq
V_\sigma \leq C_2\sqrt{r}n \leq (r/2+C_2\sqrt{r})n.
\en
\end{proof}
Unfortunately, the bound $V_\sigma \leq C_2\sqrt{r}n$ is not sufficient to prove that $V_\sigma < (r/2-C_1\sqrt{r})n$ for small values of $r$. Hence, we need to bound the constant $C_2$ more precisely to verify Condition~{\bf (3b)}. We do this in the next proposition for $r\geq6$:
\begin{proposition}[Second upper bound on stability levels]\label{prop-condition3b}
If $G_n$ is a random $r$-regular graph with $r\geq6$ and $h<C_0 \sqrt{r}$ for $C_0>0$ small enough, then, whp, for all $\sigma \notin \{\boxminus,\boxplus\}$,
\eq
V_\sigma < (r/2-C_1\sqrt{r})n.
\en
\end{proposition}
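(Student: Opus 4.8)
\emph{Approach.} The estimate used for Condition~{\bf(3a)} only relaxes the system towards $\boxminus$, and is largest---of size $2(r-4+h)\big\lceil\tfrac{r+h-2i_e(G_n)}{r-h}\tfrac n2\big\rceil$---at $|A|=\lfloor n/2\rfloor$, where for small $r$ it exceeds $(r/2-C_1\sqrt r)n$. The plan is to keep the sharpened prefactor $2(r-4+h)$---available because $r\geq6$ yields $i_e(G_n)>1$ whp by \eqref{eq-ieg1}, so that \eqref{eq-ub-PhisigmaAsigmaB-rgeq6} and \eqref{eq-ub-PhisigmaAsigmaB2-rgeq6} may be used---and to recover the missing factor by also allowing the system to climb over a balanced cut and relax down to $\boxplus$, which is cheap precisely where relaxation to $\boxminus$ is dear, namely for $|A|$ near $n/2$.

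\emph{Main steps.} Write $\sigma=\sigma^A$. By the symmetry $A\leftrightarrow A^c,\ h\leftrightarrow-h$ of $H$ I may assume $|A|\leq n/2$, and since $V_\sigma=0$ as soon as $\sigma^A$ possesses a downhill single flip, I may assume $\sigma^A$ is a local minimum. Lemma~\ref{lem-goingtolowerenergy1} gives the ``shrink'' bound $V_\sigma\leq 2(r-4+h)\big\lceil\tfrac{r+h-2i_e(G_n)}{r-h}|A|\big\rceil$, which after \eqref{eq-lb-isoperimetric} is of order $\sqrt r\,|A|$ and already lies below $(r/2-C_1\sqrt r)n$ unless $|A|$ is within a fixed fraction of $n/2$. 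For the remaining configurations I would bound $V_\sigma\leq\Phi(\sigma^A,\boxplus)-H(\sigma^A)$ by building an enlarging path that carries $\sigma^A$ over a balanced cut and then cascades to $\boxplus$ via Lemma~\ref{lem-goingtolowerenergy2}; its cost is the height of $\sigma^A$ below the balanced-cut energy, which by \eqref{eq-energybnd-isoperimetric} is at most $\big(i_e'(G_n)-h\big)n-2\big(i_e(G_n)-h\big)|A|$ up to lower-order terms. Taking the smaller of the two costs and optimising the crossover in $|A|$ yields a bound of the form $C_3\sqrt r\,n$; it then remains to verify $C_3\sqrt r<r/2-C_1\sqrt r$ for every $r\geq6$, with $C_1=\sqrt{\log 2}+C_0$ as in Proposition~\ref{prop-lb-commheight}. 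For the larger $r$ the crude bound \eqref{eq-lb-isoperimetric} suffices; for $r=6$ I would instead feed in the sharp choice of $\zeta$ permitted by Proposition~\ref{prop-lb-isoperimetric}, and take $C_0$ (hence $h$) small to absorb the ceilings and the $O(r)$ remainders.

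\emph{Main obstacle.} The hard part will be making the ``complete to $\boxplus$'' step quantitative for an \emph{arbitrary} near-balanced local minimum. Enlarging $A$ is uphill until level $n/2$, so it is not governed by Lemmas~\ref{lem-goingtolowerenergy1}--\ref{lem-goingtolowerenergy2}; and replacing $\Phi(\sigma^A,\boxplus)$ by the Condition~{\bf(2)} bound on $\Phi(\boxminus,\boxplus)$ merely reinstates the loose $\Theta(n)$ term one is trying to delete. One must instead produce an explicit path whose maximum does not overshoot the balanced-cut energy by more than a lower-order amount, although a generic local minimum need not resemble an isoperimetric-optimal cut; equivalently, one must show that near-balanced local minima have edge boundary appreciably larger than $i_e(G_n)|A|$, so that relaxing from them is genuinely cheap. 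Securing this structural control, sharply enough that the delicate $r=6$ constants still close, is the crux, and---together with the need for $i_e(G_n)>1$---is what restricts the argument to $r\geq6$.
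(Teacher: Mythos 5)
Your proposal does not follow the paper's route, and the step you yourself flag as the crux is a genuine gap. The paper's proof has no case split in $|A|$ and builds no path towards $\boxplus$ at all: it takes the sharpened shrink bound \eqref{eq-ub-PhisigmaAsigmaB-rgeq6} of Lemma~\ref{lem-goingtolowerenergy1} (and its counterpart \eqref{eq-ub-PhisigmaAsigmaB2-rgeq6} together with \eqref{eq-compare-bds-gln2} when $|A|\geq n/2$) uniformly over $\sigma$, i.e.\ at the worst case $|A|=\lfloor n/2\rfloor$, obtaining $V_\sigma\leq \frac{r-4+h}{r-h}\bigl(h+2\sqrt{\log 2}\sqrt r+\tfrac2n(r-h)\bigr)n$ as in the improvement of \eqref{eq-ub-V}, and then closes by the purely numerical check \eqref{eq-conditionr} for $r=6,7$ plus monotonicity in $r$. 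Your alternative treatment of near-balanced $A$ --- bounding $V_\sigma$ by $\Phi(\sigma^A,\boxplus)-H(\sigma^A)$ via a climb over a balanced cut --- is never actually constructed: the quantity $\bigl(i_e'(G_n)-h\bigr)n-2\bigl(i_e(G_n)-h\bigr)|A|$ is an energy \emph{difference}, not a bound on the maximum of any realizable path; Lemmas~\ref{lem-goingtolowerenergy1}--\ref{lem-goingtolowerenergy2} only control downhill cascades, and the one crossing the paper ever builds (Proposition~\ref{prop-ub-comm-height}) starts from the special isoperimetric minimizer $A^*$, not from an arbitrary near-balanced local minimum, which need not resemble any optimal cut. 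Since you explicitly leave this structural control open, the proposal does not prove the proposition.

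That said, your motivating suspicion --- that for small $r$ the uniform shrink bound exceeds $(r/2-C_1\sqrt r)n$ near $|A|=n/2$ --- is arithmetically correct, and it exposes a slip in the paper's own verification. With $C_1=\sqrt{\log 2}+C_0$ from Proposition~\ref{prop-lb-commheight}, the correct requirement is $\frac{r-4+h}{r-h}\bigl(h+2\sqrt{\log 2}\sqrt r+\tfrac2n(r-h)\bigr)+(\sqrt{\log 2}+C_0)\sqrt r<r/2$, whereas the paper's displayed condition carries $\sqrt{\log 2}+C_0$ \emph{without} the factor $\sqrt r$. Restoring it, the limiting inequality reads $(3-8/r)\sqrt{\log 2}<\sqrt r/2$, which fails at $r=6$ (about $1.39$ versus $1.22$) and indeed for all $r\leq 18$, so the paper's one-line numerics only close for roughly $r\geq 19$; even the sharper Bollob\'as constant at $r=6$ (giving $i_e(G_n)\gtrsim 1.03$) leaves the uniform bound near $1.3n$ against a threshold below $n$. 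So an additional idea of the kind you gesture at may genuinely be needed for small $r$ --- but diagnosing the obstacle is not overcoming it, and as it stands your argument, like the paper's printed one, establishes the claim only in the range of $r$ where the uniform worst-case bound already suffices and your extra mechanism is unnecessary.
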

\begin{proof}
For $r\geq6$, using the improved bound in~\eqref{eq-ub-PhisigmaAsigmaB-rgeq6} of Lemma~\ref{lem-goingtolowerenergy1} we can improve~\eqref{eq-ub-V} to
\eq
V_\sigma \leq 2 (r-4+h)\left\lceil\frac{h+2\sqrt{\log 2}\sqrt{r}}{r-h}\frac{n}{2}\right\rceil \leq \frac{r-4+h}{r-h}(h+2\sqrt{\log 2}\sqrt{r} +2/n(r-h))n.
\en
We want to prove that we can choose $C_0$ small enough and $n$ big enough so that, for all $r\geq 6$,
\eq
V_\sigma < (r/2-C_1\sqrt{r})n,
\en
where $C_1 = \sqrt{\log 2}+C_0$ as observed in Proposition~\ref{prop-lb-commheight}. Hence, we need that  
\eq
\frac{r-4+h}{r-h}(h+2\sqrt{\log 2}\sqrt{r} +2/n(r-h)) + \sqrt{\log 2}+C_0 < r/2.
\en
We can rewrite this as 
\eq
2(1-4/r)\sqrt{\log 2}\sqrt{r}+\sqrt{\log 2}-r/2 + C_3 < 0,
\en
where $C_3$ can be chosen arbitrary small by choosing $C_0$ small enough and $n$ big enough. It is easy to check numerically that, for $r=6$ and $r=7$,
\eq\label{eq-conditionr}
2(1-4/r)\sqrt{\log 2}\sqrt{r}+\sqrt{\log 2}-r/2< 0.
\en
For $r\geq7$ it can easily be seen that the derivative of the l.h.s.\ of this equation is negative. Hence, it holds for all $r\geq6$.
\end{proof}
For $r\leq5$, we cannot use that $i_e(G_n)>1$ and hence we cannot use~\eqref{eq-ub-PhisigmaAsigmaB-rgeq6}. If we use~\eqref{eq-ub-PhisigmaAsigmaB} instead, condition~\eqref{eq-conditionr} becomes
\eq
2(1-2/r)\sqrt{\log 2}\sqrt{r}+\sqrt{\log 2}-r/2< 0,
\en
which can easily be checked not to hold for $r \leq 10$. Hence, for $r\leq5$ different ideas are necessary to show that $\boxminus$ is the unique metastable state.

%---------------------------------------------------------------------------------------------------
%---------------------------------------------------------------------------------------------------

\appendix
\section{Metastable time}\label{apx-metastabletime}
In this appendix, we prove a sequence of lemma's that together give the proof of Proposition~\ref{prop-metastable-time}. In fact, we show that the probabilities converge exponentially or even super-exponentially fast as $\beta\to\infty$.

We prove the lower bound on the hitting time of $\boxplus$ separately for the process starting in $\boxminus$ and in a metastable state $\eta$:
\begin{lemma}
If Condition {\bf (1)} holds, then, for all $\varepsilon>0$, $\delta\in(0,\varepsilon)$ and sufficiently large $\beta$,
\eq
\prob_\boxminus[\tau_\boxplus>e^{\beta (\Gamma_\ell-\varepsilon)}] \geq 1-e^{-\beta\delta}.
\en
\end{lemma}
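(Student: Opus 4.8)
The plan is to convert the static energetic separation supplied by Condition~\textbf{(1)} into a dynamical lower bound on $\tau_\boxplus$ by exploiting the reversibility of the Metropolis dynamics. First I would introduce the sub-level set
\[
\mathcal{B} = \{\sigma : \Phi(\boxminus,\sigma) - H(\boxminus) < \Gamma_\ell\},
\]
the configurations reachable from $\boxminus$ along a path whose energy stays strictly below $H(\boxminus)+\Gamma_\ell$. Condition~\textbf{(1)} says exactly that $\boxminus \in \mathcal{B}$ while $\boxplus \notin \mathcal{B}$. Writing $\partial\mathcal{B}$ for the outer boundary, i.e.\ the configurations $\sigma'\notin\mathcal{B}$ obtained from some $\sigma\in\mathcal{B}$ by a single spin-flip, the deterministic input I would establish is
\[
H(\sigma') - H(\boxminus) \geq \Gamma_\ell \qquad\text{for every } \sigma'\in\partial\mathcal{B}.
\]
This follows from $\Phi(\boxminus,\sigma')\leq \max\big(\Phi(\boxminus,\sigma),H(\sigma')\big)$, obtained by appending the step $\sigma\to\sigma'$ to an optimal path from $\boxminus$ to $\sigma$: the left-hand side is at least $H(\boxminus)+\Gamma_\ell$ whereas $\Phi(\boxminus,\sigma)<H(\boxminus)+\Gamma_\ell$, so the maximum must be realized by $H(\sigma')$.

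Since the chain evolves by single spin-flips and $\boxminus\in\mathcal{B}$, $\boxplus\notin\mathcal{B}$, every trajectory reaching $\boxplus$ must first visit $\partial\mathcal{B}$, so $\tau_{\partial\mathcal{B}}\leq\tau_\boxplus$ and it suffices to bound $\prob_\boxminus[\tau_{\partial\mathcal{B}}\leq T]$ with $T=e^{\beta(\Gamma_\ell-\varepsilon)}$. The key step is a union bound in time powered by reversibility. The Metropolis rates obey detailed balance with respect to $\mu_n$, hence so does the $t$-step kernel $P^t(\eta,\sigma)=\prob_\eta[\sigma_t=\sigma]$, which gives
\[
P^t(\boxminus,\sigma') = \frac{\mu_n(\sigma')}{\mu_n(\boxminus)}\,P^t(\sigma',\boxminus) \leq e^{-\beta(H(\sigma')-H(\boxminus))} \leq e^{-\beta\Gamma_\ell}
\]
for every $\sigma'\in\partial\mathcal{B}$. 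Summing over the at most $2^n$ configurations in $\partial\mathcal{B}$ and over $t=1,\dots,T$ yields
\[
\prob_\boxminus[\tau_{\partial\mathcal{B}}\leq T] \leq \sum_{t=1}^{T}\sum_{\sigma'\in\partial\mathcal{B}} P^t(\boxminus,\sigma') \leq T\,2^n e^{-\beta\Gamma_\ell} = 2^n e^{-\beta\varepsilon}.
\]
For fixed $n$ and any $\delta\in(0,\varepsilon)$ the right-hand side drops below $e^{-\beta\delta}$ once $\beta$ is large enough, which is the asserted bound.

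The step I expect to be the main obstacle is the passage to an estimate that is uniform in time. A naive one-step bound on the escape probability from $\mathcal{B}$ fails, because a configuration $\sigma\in\mathcal{B}$ may itself already sit at energy close to $H(\boxminus)+\Gamma_\ell$, so the single-step cost $[H(\sigma')-H(\sigma)]^+$ of stepping onto $\partial\mathcal{B}$ can be negligible. Reversibility is precisely what repairs this: the bound $P^t(\boxminus,\sigma')\leq \mu_n(\sigma')/\mu_n(\boxminus)$ implicitly charges the process for the improbability of ever having reached the high-energy configuration $\sigma'$, regardless of the route taken, and this is the whole content of the estimate. The remaining points are routine, namely the detailed-balance identity for the Metropolis rates, which is a one-line computation, and the crude combinatorial factor $2^n$, which is harmless since $n$ is held fixed while $\beta\to\infty$.
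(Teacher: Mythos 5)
Your proof is correct, and it diverges from the paper at the decisive step, so it is worth comparing the two routes. Both arguments start from the same object: your $\mathcal{B}$ is exactly the paper's cycle $\mathcal{C}=\{\sigma:\Phi(\boxminus,\sigma)-H(\boxminus)<\Gamma_\ell\}$, and both reduce the problem to $\tau_{\partial^{\rm ext}\mathcal{B}}\leq\tau_\boxplus$ together with the boundary estimate $H(\sigma')\geq H(\boxminus)+\Gamma_\ell$ for $\sigma'\in\partial^{\rm ext}\mathcal{B}$ (your appended-step justification of this estimate, via $\Phi(\boxminus,\sigma')\leq\max\bigl(\Phi(\boxminus,\sigma),H(\sigma')\bigr)$, is in fact spelled out more carefully than in the paper, which asserts it directly from $\sigma'\notin\mathcal{C}$). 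From there the paper verifies that $\mathcal{C}$ is a non-trivial cycle of depth $D(\mathcal{C})\geq\Gamma_\ell$ and imports the exit-time estimate as a black box from Theorem~2.17 of Manzo, Nardi, Olivieri and Scoppola, whereas you reprove the needed estimate from scratch: detailed balance for the $t$-step kernel gives $P^t(\boxminus,\sigma')\leq\mu_n(\sigma')/\mu_n(\boxminus)\leq e^{-\beta\Gamma_\ell}$, and a union bound over times $t\leq e^{\beta(\Gamma_\ell-\varepsilon)}$ and over the at most $2^n$ boundary configurations finishes the job. What your route buys: a short, self-contained argument needing only reversibility and fixed $n$, which requires no connectivity or cycle structure of $\mathcal{B}$ (only the boundary energy bound), and which even yields the slightly stronger error $2^n e^{-\beta\varepsilon}$, absorbed into $e^{-\beta\delta}$ once $\beta\geq n\log 2/(\varepsilon-\delta)$; you also correctly diagnose why a naive one-step escape bound fails and why reversibility repairs it. What the paper's route buys: uniformity of method, since the same MNOS toolbox is reused for the other lemmas of the appendix (the recurrence estimate from a metastable state and the super-exponentially small upper-bound tail), and the cited theorem controls the exit time from an arbitrary starting point in the cycle, which the paper exploits elsewhere. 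Two cosmetic points, neither affecting correctness: sum over $t=1,\dots,\lfloor T\rfloor$ since $T=e^{\beta(\Gamma_\ell-\varepsilon)}$ is not an integer, and note that detailed balance for $P^t$ follows by induction from the one-step identity $\mu_n(\sigma)c(\sigma,\sigma')=\mu_n(\sigma')c(\sigma',\sigma)$, which holds because $H(\sigma)+[H(\sigma')-H(\sigma)]^+=\max\{H(\sigma),H(\sigma')\}$ is symmetric in $\sigma$ and $\sigma'$.
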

\begin{proof}
To prove this lemma we need to introduce some terminology from~\cite{ManNarOliSco04}. For a non-empty set of configurations $\mathcal{A}$ denote by $\partial^{\rm ext}\mathcal{A}$ the external boundary of $\mathcal{A}$, i.e.,
\eq
\partial^{\rm ext}\mathcal{A} = \{ \sigma^B \notin \mathcal{A} : |A\triangle B|=1 {\rm\ for\ some\ }\sigma^A \in \mathcal{A}\}.
\en
We define a {\em non-trivial cycle} as a connected set of configurations $\mathcal{C}$ such that
\eq
\max_{\sigma \in \mathcal{C}} H(\sigma) < \min_{\eta \in \partial^{\rm ext}\mathcal{C}} H(\eta).
\en
The depth $D(\mathcal{C})$ of a non-trivial cycle $\mathcal{C}$ is defined as
\eq
D(\mathcal{C}) = \min_{\eta \in \partial^{\rm ext}\mathcal{C}} H(\eta) - \min_{\sigma \in \mathcal{C}} H(\sigma).
\en

We consider the cycle 
\eq
\mathcal{C} = \{\sigma : \Phi(\boxminus,\sigma) - H(\boxminus) < \Gamma_\ell\}.
\en
From the definition of the communication height it follows that for any $\sigma\in\mathcal{C}$ it holds that $H(\sigma) < \Gamma_\ell+H(\boxminus)$ and for any configuration $\eta \in \partial^{\rm ext}\mathcal{C}$ it holds that $\eta \notin \mathcal{C}$ and hence $H(\eta) \geq \Gamma_\ell+H(\boxminus)$. Hence, $\mathcal{C}$ is a non-trivial cycle.

We can thus use \cite[Theorem~2.17]{ManNarOliSco04} to conclude that, for any $\sigma\in \mathcal{C}$, $\varepsilon>0$, $\delta\in(0,\varepsilon)$ and sufficiently large $\beta$,
\eq
\prob_\sigma[\tau_{\partial^{\rm ext}\mathcal{C}}>e^{\beta (D-\varepsilon)}] \geq 1-e^{-\beta\delta},
\en
where
\eq
D= D(\mathcal{C}) = \min_{\eta \in \partial^{\rm ext}\mathcal{C}} H(\eta) - \min_{\sigma \in \mathcal{C}} H(\sigma) \geq \Gamma_\ell+H(\boxminus)-H(\boxminus)=\Gamma_\ell.
\en
Clearly, $\boxminus\in\mathcal{C}$, but $\boxplus\notin\mathcal{C}$ by Condition~{\bf (1)}. Hence if the process starts from $\boxminus$ then $\tau_\boxplus \geq \tau_{\partial^{\rm ext}\mathcal{C}}$. Hence,
\eq
\prob_\boxminus[\tau_{\boxplus}>e^{\beta (\Gamma_\ell-\varepsilon)}] \geq \prob_\boxminus[\tau_{\partial^{\rm ext}\mathcal{C}}>e^{\beta (\Gamma_\ell-\varepsilon)}] \geq \prob_\boxminus[\tau_{\partial^{\rm ext}\mathcal{C}}>e^{\beta (D-\varepsilon)}] \geq 1-e^{-\beta\delta}.
\en
\end{proof}

The lower bound on the metastable time for the process starting from a metastable state is given in the next lemma:
\begin{lemma}\label{lem-lbmetastabletime}
If $\eta$ is a metastable configuration and Condition {\bf (1)} holds, then
\eq
\Gamma = V_\eta \geq \Gamma_\ell,
\en
and for all $\varepsilon>0$, there exists a constant $K$ such that for all sufficiently large $\beta$
\eq
\prob_\eta[\tau_\boxplus>e^{\beta (\Gamma_\ell-\varepsilon)}] \geq 1-e^{-K\beta}.
\en
\end{lemma}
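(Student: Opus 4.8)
The plan is to follow the scheme of the previous lemma, but with $\eta$ in place of $\boxminus$; the one genuinely new ingredient is the preliminary bound $\Gamma = V_\eta \geq \Gamma_\ell$, which I need in order to know that $\eta$ lies at the bottom of a cycle deep enough to trap the process for the required time. The equality $\Gamma = V_\eta$ is immediate from the definition of a metastable state, so everything reduces to proving $\Gamma \geq \Gamma_\ell$ and then running the cycle estimate of~\cite{ManNarOliSco04}.

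For $\Gamma \geq \Gamma_\ell$ I would in fact prove the stronger statement $V_\boxminus \geq \Gamma_\ell$; since $\boxminus \neq \boxplus$ this gives $\Gamma = \max_{\sigma \neq \boxplus} V_\sigma \geq V_\boxminus \geq \Gamma_\ell$. By definition $V_\boxminus = \min_{\sigma' : H(\sigma') < H(\boxminus)} \bigl( \Phi(\boxminus,\sigma') - H(\boxminus) \bigr)$, so it suffices to show that every $\sigma'$ with $H(\sigma') < H(\boxminus)$ satisfies $\Phi(\boxminus,\sigma') - H(\boxminus) \geq \Gamma_\ell$. The key observation is that, because $h < i_e(G_n)$, the bound~\eqref{eq-energybnd-isoperimetric} forces any $\sigma^A$ with $0 < |A| \leq n/2$ to obey $H(\sigma^A) > H(\boxminus)$; hence a configuration $\sigma' = \sigma^A$ with $H(\sigma') < H(\boxminus)$ must have $|A| > n/2$. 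Any path from $\boxminus$ to such a $\sigma'$ increases $|A|$ from $0$ past $n/2$ one flip at a time and therefore visits a configuration with exactly $\lfloor n/2 \rfloor$ plus spins, which, exactly as in the proof of Proposition~\ref{prop-lb-commheight}, has energy at least $\Gamma_\ell + H(\boxminus)$. Thus $\Phi(\boxminus,\sigma') \geq \Gamma_\ell + H(\boxminus)$, as required.

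For the probabilistic estimate I would reuse the cycle apparatus of~\cite{ManNarOliSco04}, now centred at $\eta$. Set $\mathcal{C}_\eta = \{\sigma : \Phi(\eta,\sigma) - H(\eta) < \Gamma_\ell\}$, a connected set (every element communicates with $\eta$ below height $\Gamma_\ell + H(\eta)$) containing $\eta$. Since $V_\eta = \Gamma \geq \Gamma_\ell$, no configuration of energy strictly below $H(\eta)$ can lie in $\mathcal{C}_\eta$, so $\eta$ realises $\min_{\sigma \in \mathcal{C}_\eta} H(\sigma)$; and the same boundary argument as in the previous lemma shows $H(\zeta) \geq \Gamma_\ell + H(\eta)$ for every $\zeta \in \partial^{\rm ext}\mathcal{C}_\eta$. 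Hence $\mathcal{C}_\eta$ is a non-trivial cycle of depth $D(\mathcal{C}_\eta) \geq \Gamma_\ell$. Because $\boxplus$ is the unique global minimiser of $H$ we have $H(\boxplus) < H(\eta)$, so $\boxplus \notin \mathcal{C}_\eta$ and therefore $\tau_\boxplus \geq \tau_{\partial^{\rm ext}\mathcal{C}_\eta}$. Applying \cite[Theorem~2.17]{ManNarOliSco04} to $\mathcal{C}_\eta$ started at $\eta$ yields, for every $\varepsilon > 0$, $\delta \in (0,\varepsilon)$ and all large $\beta$, the bound $\prob_\eta[\tau_{\partial^{\rm ext}\mathcal{C}_\eta} > e^{\beta(D-\varepsilon)}] \geq 1 - e^{-\beta\delta}$; since $D \geq \Gamma_\ell$ this gives the lemma with $K = \delta$.

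I expect the preliminary inequality $\Gamma \geq \Gamma_\ell$ to be the main obstacle, and inside it the decisive point is the characterisation of the configurations below $H(\boxminus)$ as exactly those with more than $n/2$ plus spins — this is precisely where $h < i_e(G_n)$ and the isoperimetric lower bound are used. Once $\eta$ is known to bottom out a cycle of depth at least $\Gamma_\ell$ with $\boxplus$ outside it, the tail estimate is a direct transcription of the argument already used for $\boxminus$.
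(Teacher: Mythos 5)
Your second half is fine: granted $V_\eta=\Gamma\geq\Gamma_\ell$, the set $\mathcal{C}_\eta=\{\sigma:\Phi(\eta,\sigma)-H(\eta)<\Gamma_\ell\}$ is a non-trivial cycle of depth at least $\Gamma_\ell$ whose bottom is $\eta$ and which does not contain $\boxplus$, and \cite[Theorem~2.17]{ManNarOliSco04} then yields the tail bound. The paper simply cites \cite[Theorem~4.1]{ManNarOliSco04} at this point, so your cycle construction is a correct, slightly more self-contained variant of that step.

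The genuine gap is in your proof of $\Gamma\geq\Gamma_\ell$. The lemma is an abstract statement whose \emph{only} hypothesis is Condition {\bf (1)}; it is part of the proof of Proposition~\ref{prop-metastable-time}, which is a purely conditional result meant to apply to the Ising model on an arbitrary graph and for any $\Gamma_\ell$ satisfying the conditions. Your argument instead imports the model-specific inputs of Proposition~\ref{prop-lb-commheight}: the assumption $h<i_e(G_n)$, the energy bound~\eqref{eq-energybnd-isoperimetric}, and---crucially---the claim that every configuration with $\lfloor n/2\rfloor$ plus spins has energy at least $\Gamma_\ell+H(\boxminus)$, which amounts to assuming $\Gamma_\ell\leq 2(i_e(G_n)-h)\lfloor n/2\rfloor$. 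None of these are hypotheses of the lemma. Moreover, the ``stronger statement'' $V_\boxminus\geq\Gamma_\ell$ that you target is genuinely false under Condition {\bf (1)} alone: Condition {\bf (1)} constrains only the barrier towards $\boxplus$, not the barrier from $\boxminus$ to other lower-energy configurations. For example, for the Ising model on a disconnected graph made of one small and one large clique, flipping the small clique is a cheap downhill escape from $\boxminus$ (so $V_\boxminus$ is of order the small clique's barrier), while $\Phi(\boxminus,\boxplus)-H(\boxminus)$ is of order the large clique's barrier; Condition {\bf (1)} then holds with a $\Gamma_\ell$ far exceeding $V_\boxminus$. So no proof via $V_\boxminus\geq\Gamma_\ell$ can work without extra assumptions, and with them you prove only a model-specific variant, not the lemma as stated. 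The missing idea is the chaining argument of \cite[Theorem~2.4]{CirNar13} that the paper uses: assume for contradiction $\Gamma<\Gamma_\ell$, so $V_\sigma<\Gamma_\ell$ for \emph{every} $\sigma\neq\boxplus$; starting from $\sigma_0=\boxminus$, repeatedly move to a configuration of strictly lower energy along a path whose elevation stays below $\Gamma_\ell+H(\sigma_i)\leq\Gamma_\ell+H(\boxminus)$; finiteness of the configuration space forces this to terminate at $\boxplus$, and concatenating the paths gives $\Phi(\boxminus,\boxplus)-H(\boxminus)<\Gamma_\ell$, contradicting Condition {\bf (1)}. This derives $\Gamma\geq\Gamma_\ell$ (hence $V_\eta\geq\Gamma_\ell$ for any metastable $\eta$) from Condition {\bf (1)} alone, keeping the lemma and Proposition~\ref{prop-metastable-time} model-independent.
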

\begin{proof}
We prove this lemma using the same reasoning as in the proof of~\cite[Theorem~2.4]{CirNar13}. Suppose that Condition {\bf (1)} holds, and assume that $\Gamma<\Gamma_\ell$. Then it holds that $V_\sigma<\Gamma_\ell$ for all configurations $\sigma\neq\boxplus$. Hence, if we start with configuration $\sigma_0=\boxminus$, we can find a state $\sigma_1$ such that $H(\sigma_1)<H(\sigma_0)$ and $\Phi(\sigma_0,\sigma_1)-H(\sigma_0)<\Gamma_\ell$, i.e., there exists a path $\omega_0$ from $\sigma_0$ to $\sigma_1$ such that
\eq
\max_{\sigma'\in\omega_0} H(\sigma') <\Gamma_\ell + H(\sigma_0).
\en

As long as the new configuration with lower energy is not equal to $\boxplus$, we can repeat this argument and find configurations $\sigma_2, \sigma_3, \ldots, \sigma_n$ such that $H(\sigma_0)>H(\sigma_1)>H(\sigma_2)>\ldots>H(\sigma_n)$ and $\Phi(\sigma_i,\sigma_{i+1})-H(\sigma_i)<\Gamma_\ell$ for all $i=0,2,\ldots,n-1$, i.e, there exist paths $\omega_i$ so that
\eq
\max_{\sigma'\in\omega_i} H(\sigma') <\Gamma_\ell + H(\sigma_i).
\en

Since the number of configurations is finite and the energy is strictly decreasing every step, if we choose $n$ large enough we will end in the configuration $\sigma_n=\boxplus$. If we let $\omega$ be the concatenation of the paths $\omega_0,\omega_1,\ldots,\omega_n$, then $\omega$ is a path from $\boxminus$ to $\boxplus$ and
\eq
\max_{\sigma'\in\omega} H(\sigma') = \max_{0\leq i<n} \max_{\sigma'\in\omega_i} H(\sigma')<\Gamma_\ell + \max_{0\leq i<n}H(\sigma_i) = \Gamma_\ell + H(\boxminus).
\en
Hence,
\eq
\Phi(\boxminus,\boxplus) - H(\boxminus) < \Gamma_\ell,
\en
which is in contradiction with Condition {\bf (1)}.

So, if Condition {\bf (1)} holds, then $\Gamma\geq \Gamma_\ell$. Since, $\eta$ is assumed to be a metastable state $V_\eta=\Gamma\geq\Gamma_\ell$. The second statement of the lemma now immediately follows from~\cite[Theorem~4.1]{ManNarOliSco04}.
\end{proof}
 
The upper bound on the metastable time is proved in the next lemma. Here, a function $f(\beta)$ is called super-exponentially small (SES), denoted by $f(\beta)={\rm SES}$, if
\eq
\lim_{\beta\to\infty} \frac{1}{\beta}\log f(\beta)=-\infty.
\en

\begin{lemma}[Upper bound on the metastable time]
If Conditions {\bf (2)} and {\bf (3a)} hold, then, for all $\varepsilon>0$,
\eq
\sup_{\sigma} \prob_\sigma[\tau_\boxplus>e^{\beta (\Gamma_u+\varepsilon)}] = {\rm SES}.
\en
\end{lemma}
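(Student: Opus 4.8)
The plan is to reduce the statement to the single inequality $\Gamma \le \Gamma_u$ for the maximal stability level $\Gamma = \max_{\sigma\neq\boxplus} V_\sigma$, and then to invoke the recurrence estimates of~\cite{ManNarOliSco04}. Condition~{\bf (3a)} already gives $V_\sigma \le \Gamma_u$ for every $\sigma\notin\{\boxminus,\boxplus\}$, so only $\sigma=\boxminus$ remains to be treated. Here I would observe that, since $h>0$ makes $\boxplus$ the unique global minimum of $H$, the configuration $\boxplus$ is an admissible competitor in the minimum defining $V_\boxminus$; hence $V_\boxminus \le \Phi(\boxminus,\boxplus)-H(\boxminus) \le \Gamma_u$ by Condition~{\bf (2)}. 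Combining the two bounds yields $\Gamma\le\Gamma_u$.

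Given $\Gamma\le\Gamma_u$, the uniform upper bound follows from the recurrence property associated with the maximal stability level, namely that for every $\varepsilon>0$ one has $\sup_\sigma \prob_\sigma[\tau_\boxplus > e^{\beta(\Gamma+\varepsilon)}]={\rm SES}$. To make this self-contained with the tools already introduced, I would argue through cycles. The key structural fact is that every maximal non-trivial cycle $\mathcal{C}$ with $\boxplus\notin\mathcal{C}$ has depth $D(\mathcal{C})\le\Gamma\le\Gamma_u$: otherwise a bottom configuration $\sigma$ of $\mathcal{C}$ would have to cross $\partial^{\rm ext}\mathcal{C}$ to reach any strictly lower energy, forcing $V_\sigma \ge D(\mathcal{C}) > \Gamma_u$ and contradicting Condition~{\bf (3a)} (or Condition~{\bf (2)} when $\sigma=\boxminus$). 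Starting from an arbitrary $\sigma$, the process sits in such a cycle, which by~\cite[Theorem~2.17]{ManNarOliSco04} it exits into a strictly lower-energy configuration within time $e^{\beta(\Gamma_u+\varepsilon)}$ with probability $1-{\rm SES}$. Since the energy strictly decreases at each exit and takes only finitely many values, after a $\beta$-independent number of exits the process reaches the global minimum $\boxplus$.

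The main point to get right is the \emph{uniformity in the starting configuration} together with the preservation of the ${\rm SES}$ property along the descent. Because $n$ is fixed the state space is finite, so there are only finitely many cycles and finitely many energy levels; a union bound over these, and over the bounded number of successive exits, turns the individual ${\rm SES}$ exit estimates into a single ${\rm SES}$ bound uniform over all $\sigma$. This is exactly where Condition~{\bf (3a)} is indispensable: bounding only $V_\boxminus$ would not exclude some other non-stable configuration sitting at the bottom of a cycle of depth exceeding $\Gamma_u$, which would act as a deeper trap and spoil the uniform estimate. Once $\Gamma\le\Gamma_u$ is established the rest is the bookkeeping just described, and the conclusion $\sup_\sigma\prob_\sigma[\tau_\boxplus > e^{\beta(\Gamma_u+\varepsilon)}]={\rm SES}$ follows, since enlarging the exponent from $\Gamma+\varepsilon$ to $\Gamma_u+\varepsilon$ only strengthens the bound.
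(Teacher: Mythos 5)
Your reduction step is exactly the one in the paper: Condition {\bf (3a)} bounds $V_\sigma$ for $\sigma\notin\{\boxminus,\boxplus\}$, and since $h>0$ makes $\boxplus$ a strictly lower-energy competitor, Condition {\bf (2)} gives $V_\boxminus\le\Phi(\boxminus,\boxplus)-H(\boxminus)\le\Gamma_u$; hence $V_\sigma\le\Gamma_u$ for every $\sigma\neq\boxplus$, i.e.\ the set $K_{\Gamma_u}=\{\sigma:V_\sigma>\Gamma_u\}$ reduces to $\{\boxplus\}$. At that point the paper simply invokes the recurrence theorem \cite[Theorem~3.1]{ManNarOliSco04}, which is verbatim the property you state at the start of your second paragraph; had you stopped there, your proof would coincide with the paper's and be complete.

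The problem lies in the ``self-contained'' cycle argument you offer in place of that citation. Its pivot --- that the process ``exits into a strictly lower-energy configuration'' and that ``the energy strictly decreases at each exit'' --- is backwards: by the very definition of a non-trivial cycle, $\max_{\sigma\in\mathcal{C}}H(\sigma)<\min_{\eta\in\partial^{\rm ext}\mathcal{C}}H(\eta)$, so the configuration reached upon exiting $\mathcal{C}$ has strictly \emph{higher} energy than every configuration of $\mathcal{C}$, including the one you started from. Consequently there is no quantity that decreases at exits, and --- worse --- after exiting, the process may immediately fall back into the same cycle (this is in fact the typical behavior, and is why exit times are exponentially long), so the number of successive exits before reaching $\boxplus$ is a random quantity not bounded by any $\beta$-independent constant; your union bound ``over the bounded number of successive exits'' has nothing to range over. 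Repairing this requires either controlling the exit \emph{point} (the process leaves through a minimal saddle and then follows a downhill path with probability close to one), or a renewal-type iteration in which one first obtains a probability bounded away from zero of hitting $\boxplus$ within time $e^{\beta(\Gamma_u+\varepsilon)}$ uniformly in the starting state and then powers it up over $e^{\beta\varepsilon}$ rounds to get ${\rm SES}$; either way one is essentially re-proving \cite[Theorem~3.1]{ManNarOliSco04}. The economical fix is to do what the paper does: keep your first step, cite the recurrence theorem, and drop the cycle descent.
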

\begin{proof} 
Let
\eq
K_V=\{\sigma : V_\sigma > V\},
\en
be the so-called {\em metastable set at level V}. If we set $V=\Gamma_u$, then it follows from Conditions {\bf (2)} and {\bf (3a)} that $V_\sigma \leq V$ for all $\sigma \neq \boxplus$. Hence, $K_{\Gamma_u} = \{\boxplus\}$, since $V_\boxplus=\infty$. It thus follows from \cite[Theorem~3.1]{ManNarOliSco04} that, for all $\varepsilon>0$,
\eq
\sup_{\sigma}\prob_\sigma[\tau_\boxplus>e^{\beta (\Gamma_u+\varepsilon)}] = \textsc{SES}.
\en
\end{proof}
Because of the supremum, the same bound on the hitting time of $\boxplus$ holds for the process starting from any state $\sigma$ and in particular for the process starting in $\boxminus$ and in any metastable state $\eta$.

We finally characterize when $\boxminus$ is the unique metastable state:
\begin{lemma}
If Conditions {\bf (1)} and {\bf (3b)} hold, then $\boxminus$ is the unique metastable state.
\end{lemma}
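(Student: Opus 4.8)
The plan is to deduce everything from a comparison of stability levels, exactly as in the uniqueness argument of~\cite{CirNar13}. Recall that the metastable states are by definition the configurations attaining the maximum $\Gamma=\max_{\sigma\neq\boxplus}V_\sigma$, that this maximum runs over the finite configuration space and is therefore attained, and that $V_\sigma<\infty$ for every $\sigma\neq\boxplus$ since $\boxplus$ is a strictly lower-energy state (so that $V_\boxminus\leq\Phi(\boxminus,\boxplus)-H(\boxminus)<\infty$ in particular). It thus suffices to show that the value $\Gamma$ is attained at $\boxminus$ and nowhere else among $\sigma\neq\boxplus$.

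First I would record that Condition~{\bf (1)} forces $\Gamma\geq\Gamma_\ell$. This is precisely the first half of the proof of Lemma~\ref{lem-lbmetastabletime}: were $\Gamma<\Gamma_\ell$, then $V_\sigma<\Gamma_\ell$ for all $\sigma\neq\boxplus$, and the descending-energy chain $\boxminus=\sigma_0,\sigma_1,\ldots,\boxplus$ constructed there would yield a path from $\boxminus$ to $\boxplus$ whose maximal energy stays strictly below $\Gamma_\ell+H(\boxminus)$, contradicting $\Phi(\boxminus,\boxplus)-H(\boxminus)\geq\Gamma_\ell$. Hence the inequality $\Gamma\geq\Gamma_\ell$ may be invoked directly.

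I would then combine this with Condition~{\bf (3b)}. For every $\sigma\notin\{\boxminus,\boxplus\}$ we have $V_\sigma<\Gamma_\ell\leq\Gamma$, so no such $\sigma$ attains the maximum defining $\Gamma$, and none of them is metastable. Since $\boxplus$ is excluded from that maximization (indeed $V_\boxplus=\infty$) and the maximum is attained by some $\sigma\neq\boxplus$, it must be attained at $\boxminus$; thus $V_\boxminus=\Gamma$, so $\boxminus$ is metastable, and by the strict inequality for all other configurations it is the only one.

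There is essentially no obstacle once $\Gamma\geq\Gamma_\ell$ is in hand; the conclusion is a one-line comparison of stability levels. The only point demanding attention is that Condition~{\bf (3b)} is \emph{strict} (unlike the non-strict Condition~{\bf (3a)} used for the time bounds): it is precisely this strictness, together with $\Gamma\geq\Gamma_\ell$, that pushes the maximizer off of every $\sigma\notin\{\boxminus,\boxplus\}$ and thereby upgrades the assertion ``$\boxminus$ is metastable'' to ``$\boxminus$ is the unique metastable state.''
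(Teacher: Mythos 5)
Your proof is correct and takes essentially the same route as the paper: both rest on the descending-energy path construction of Lemma~\ref{lem-lbmetastabletime} to show that having all stability levels strictly below $\Gamma_\ell$ would contradict Condition~\textbf{(1)}, with the strictness of Condition~\textbf{(3b)} then forcing $\boxminus$ to be the unique maximizer of $\sigma\mapsto V_\sigma$ over $\sigma\neq\boxplus$. The only cosmetic difference is that you factor the argument through the inequality $\Gamma\geq\Gamma_\ell$, whereas the paper assumes $V_\boxminus<\Gamma_\ell$ and derives the contradiction with Condition~\textbf{(1)} directly; these are logically equivalent.
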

\begin{proof}
We want to prove that $V_\boxminus\geq\Gamma_\ell$, since then
\eq
\{\boxminus\} = \argmax_{\sigma \neq \boxplus} V_\sigma.
\en
Suppose that on the contrary $V_\boxminus < \Gamma_\ell$. From Condition {\bf (3b)} it follows that also $V_\sigma<\Gamma_\ell$ for all $\sigma\notin\{\boxminus,\boxplus\}$. Then, by the same reasoning as in Lemma~\ref{lem-lbmetastabletime}, we can find a path $\omega$ from $\boxminus$ to $\boxplus$ such that
\eq
\max_{\sigma'\in\omega} H(\sigma') <\Gamma_\ell + H(\boxminus),
\en
and hence that
\eq
\Phi(\boxminus,\boxplus)- H(\boxminus) < \Gamma_\ell,
\en
which is in contradiction with Condition~{\bf (1)}. 
\end{proof}

%---------------------------------------------------------------------------------------------------
%---------------------------------------------------------------------------------------------------

\paragraph*{Acknowledgements.}
The author thanks Alessandra Bianchi and Francesca Collet for useful discussions on metastability. He also thanks Oliver Jovanovski for his suggestion how to improve Lemma~\ref{lem-goingtolowerenergy1} which also improved the main result and the anonymous referee for suggesting many useful improvements.
This research has been partially supported by Futuro in Ricerca 2010 (grant n. RBFR10N90W).

%---------------------------------------------------------------------------------------------------
%---------------------------------------------------------------------------------------------------

\end{document}